\newtheorem{theorem}{Theorem}[section]
\newtheorem{proposition}[theorem]{Proposition}
\newtheorem{corollary}[theorem]{Corollary}
\newtheorem{lemma}[theorem]{Lemma}
\newtheorem{remark}[theorem]{Remark}
\newtheorem{example}[theorem]{Example}
\newcommand{\cl}{\operatorname{cl}} 
\newcommand{\id}{\operatorname{id}}
\newcommand{\Hom}{\operatorname{Hom}}
\title[metric compactifications and coarse structures]{Metric compactifictions and coarse structures}
\author[K. Mine]{Kotaro Mine}
 \address[K. Mine]{Institute of Mathematics,
 University of Tsukuba, Tsukuba, 305-8571, Japan}
 \email{pen@math.tsukuba.ac.jp}
\author[A.~Yamashita]{Atsushi Yamashita}
\address[A.~Yamashita]{Graduate School of Information Sciences, Tohoku University, Sendai 980-8579, Japan}
\email{yamyam@rock.sannet.ne.jp}
\subjclass[2010]{51F99, 53C23, 54C20, 18B30}
\keywords{coarse geometry, Higson corona, continuously controlled coarse structure, uniform continuity, boundary at infinity}
\begin{document}
\maketitle
\begin{abstract}
Let $\mathbf{TB}$ be the category of totally bounded, locally compact metric spaces
with the $C_0$ coarse structures. We show that if $X$ and $Y$ are in $\mathbf{TB}$ then $X$ and $Y$ are coarsely equivalent if and only if their Higson coronas are homeomorphic. In fact, the Higson corona functor gives an equivalence of categories $\mathbf{TB}\to\mathbf{K}$, where $\mathbf{K}$ is the category of compact metrizable spaces. We use this fact to show that the continuously controlled coarse structure on a locally compact space $X$ induced by some metrizable compactification $\tilde{X}$ is determined only by the topology of the remainder $\tilde{X}\setminus X$.
\end{abstract}

\section{Introduction}
When studying ``large-scale'' or ``asymptotic'' structures of metric spaces, one is often led to consider a kind of ``boundary at infinity'' of them, for example the boundary sphere $\partial_\infty \mathbb{H}^n=S^{n-1}$ of the Poincar\'{e} ball $\mathbb{H}^n$. This boundary sphere reflects the geometry of $\mathbb{H}^n$ in the sense that the isometries of $\mathbb{H}^n$ are in one-to-one correspondence with the M\"{o}bius transformations of $S^{n-1}$. 

In many situations we can associate a boundary at infinity to a metric space, and in the optimal case, the large-scale structure in question is recovered from the boundary. Results in this direction are pursued by several authors, including Paulin~\cite{Paulin}, Bonk-Schramm~\cite{Bonk-Schramm}, Buyalo-Schroeder~\cite{Buyalo-Schroeder} and Jordi~\cite{Jordi}. As an example, let $X$ and $Y$ be Gromov hyperbolic geodesic spaces and $\partial_\infty X$ and $\partial_\infty Y$ their boundaries at infinity. We can define a visual metric on each of these boundaries, which is an analogue of the angle metric on $\partial_\infty \mathbb{H}^n=S^{n-1}$ (see~\cite[Chapter III.H]{Bridson-Haefliger}). Then, under some niceness condition (for example, it is satisfied by Cayley graphs of Gromov hyperbolic groups and their boundaries), the metric spaces $X$ and $Y$ are quasi-isometric if and only if $\partial_\infty X$ and $\partial_\infty Y$ are quasi-M\"{o}bius equivalent \cite{Buyalo-Schroeder, Jordi}.

In the present paper, we prove another such correspondence in more topological settings. Let $X$ be a locally compact, totally bounded metric space. Then, our main result states that a large-scale structure called the \textit{$C_0$ coarse structure} on $X$ introduced by Wright~\cite{Wright} (see \S 2) is completely recovered from the topology of the boundary $\tilde{X}\setminus X$, where $\tilde{X}$ stands for the completion of $X$ (Theorem \ref{category_equivalence}). 

Before introducing our results in more details, we informally review the notion of coarse structure (see \S 2 for formal definitions).  ``Large-scale'' properties of spaces, such as quasi-isometry invariant properties of finitely generated groups, can be described by coarse structures. A coarse structure on a set $X$ is given by a collection of \textit{controlled} subsets of $X\times X$ satisfying several axioms. When $E\subset X\times X$ is a fixed controlled subset, one think of $x$ and $y$ as ``close uniformly''  for all $(x,y)\in E$. Thus a typical coarse structure on a metric space $X$ is the \textit{bounded coarse structure}, where  $E\subset X\times X$ is controlled if and only if there exists $C>0$ such that $d(x,y)\leq C$ for all $(x,y)\in E$. In this structure, the phrase ``close uniformly'' above has its usual meaning.
The $C_0$ coarse structure on a locally compact metric space, mentioned above, is another kind of coarse structure. Roughly, the phrase ``close uniformly'' in the $C_0$ structure actually means ``becoming closer and closer as points approach to infinity''.

Given a suitable coarse structure on a locally compact Hausdorff space, we can define the \textit{Higson compactification} $hX$ of $X$ (see \S 2), a compactification of $X$ defined in terms of the ring of ``slowly oscillating'' functions called the Higson functions. The remainder $\nu X=hX\setminus X$ is called the \textit{Higson corona}, and $\nu X$ can be regarded as a boundary of $X$. The corona $\nu X$ is a coarse invariant, in the sense that ``coarsely equivalent'' coarse spaces have homeomorphic Higson coronas~\cite[Corollary 2.42]{Roe}. Then, it is now natural to ask whether the converse holds: if $\nu X$ and $\nu Y$ are homeomorphic, then are $X$ and $Y$ coarsely equivalent? As we mentioned earlier, an analogous statement is true for Gromov hyperbolic groups.

The paper of Cuchillo-Ib\'{a}\~{n}ez, Dydak, Koyama and Mor\'{o}n~\cite{CDKM}
gives an affirmative answer to this question about Higson coronas in some special case. They considered $Z$-sets (which are ``thin'' closed subsets in some sense) in the Hilbert cube and their complements, where each $Z$-set can be regarded as the Higson corona of the complement equipped with the $C_0$ structure. Their result then states that the category of $Z$-sets in the Hilbert cube (and the continuous maps between them) is isomorphic to the category of the $C_0$ coarse spaces formed by their complements.

In the present paper, we extend the argument in \cite{CDKM} to general locally compact metric spaces equipped with the $C_0$ structure. 
Formally stated, our main result claims an equivalence of categories $\mathbf{TB}\to \mathbf{K}$, where $\mathbf{TB}$ is the category of totally bounded locally compact metric spaces and $C_0$ coarse maps modulo closeness, and $\mathbf{K}$ is the category of compact metrizable spaces and continuous maps (Theorem \ref{category_equivalence}). This equivalence is realized by the Higson corona functor, which in this case reduces to the operation of taking the complement in the completion.  As a consequence of the equivalence $\mathbf{TB}\simeq \mathbf{K}$, it follows that the $C_0$ coarse structure on $M\setminus Z$, where $Z$ is a nowhere dense closed set in a compact metric space $M$, is determined (up to coarse equivalence) only from the topological type of $Z$, regardless of the space $M$ or how $Z$ is embedded in $M$ (Corollary \ref{topology_and_C0}).  

A compactification $\tilde{X}$ of a (locally compact Hausdorff) space $X$ in general induces a natural coarse structure on $X$, called the \textit{continuously controlled coarse structure} (see \S 2). Since this structure can be regarded as a $C_0$ coarse structure with the Higson compactification $\tilde{X}$ (see Corollary \ref{compact_metric_smirnov} and Remark \ref{another_category}), we have that the continuously controlled structure on $X$ is determined, up to coarse equivalence, by the topological type of the remainder $\tilde{X}\setminus X$ (Corollary \ref{topological_coarse}).

\section{Preliminaries on coarse strcutures and Higson coronas}
We refer the reader to Roe's monograph~\cite{Roe} as a basic reference for this section.

A \textit{coarse structure} on a set $X$ is defined as a collection $\mathcal{E}$ of subsets of $X\times X$, called \textit{controlled sets,} satisfying the following five conditions: (i) the diagonal $\Delta_X=\{(x,x)\,|\,x\in X\}$ belongs to $\mathcal{E}$, (ii) if $E\in\mathcal{E}$ and $E'\subset E$ then $E'\in\mathcal{E}$, (iii) if $E\in\mathcal{E}$ then its inverse $E^{-1}=\{(x,y)\in X\times X\,|\,(y,x)\in E\}$ belongs to $\mathcal{E}$, (iv) if $E, F\in \mathcal{E}$ then the composition $E\circ F=\{(x,z)\in X\times X\,|\,\text{there exists }y\in X\text{ such that }(x,y)\in E\text{ and }(y,z)\in F\}$ belongs to $\mathcal{E}$, and (v) if $E, F\in \mathcal{E}$ then the union $E\cup F$ belongs to $\mathcal{E}$. The pair $(X,\mathcal{E})$ (or briefly $X$) is then called a \textit{coarse space}. A subset $B\subset X$ is called \textit{bounded} in the coarse space $X$ if $B\times B$ is controlled.

Let $X$ and $Y$ be coarse spaces. We can define a class of maps from $X$ to $Y$ that respect coarse structures, namely the coarse maps, as follows. A map $f\colon X\to Y$ is called \textit{proper} if the inverse image $f^{-1}(B)$ is bounded for every bounded set $B$ of $Y$.  The map $f$ is called \textit{bornologous} if $(f\times f)(E)\subset Y\times Y$ is controlled for every controlled set $E\subset X\times X$. Then, we say that $f\colon X\to Y$ is a \textit{coarse map} if it is both proper and bornologous. A coarse map $f\colon X\to Y$ is called a \textit{coarse equivalence} if there exists a coarse map $g\colon Y\to X$ such that both $g\circ f$ and $f\circ g$ are close to their respective identities. Here maps $h, k\colon S\to Z$ from a set $S$ to a coarse space $Z$ are called \textit{close} if the set $\{(h(s), k(s))\,|\,s\in S\}$ is controlled. Coarse spaces $X$ and $Y$ are then called \textit{coarsely equivalent}.

A coarse structure on a paracompact Hausdorff space $X$ is called \textit{proper} (in which case we say that $X$ is a \textit{proper coarse space})  if (1) there is a controlled neighborhood of the diagonal $\Delta_X$ and (2) every bounded subset has compact closure. For a proper coarse space $X$, the converse statement of (2) is also true if $X$ is \textit{coarsely connected}, that is, each singleton $\{(x,y)\}$ is controlled (see \cite[Proposition 2.23]{Roe}). Notice also that a proper coarse space is necessarily locally compact.

As mentioned in the introduction, a standard example of a coarse structure is the \textit{bounded coarse structure} on a metric space $(X,d)$, where $E\subset X\times X$ is defined to be controlled if there exists $C>0$ such that $d(x,y)\leq C$ for every $(x,y)\in E$. 
In this structure, the bounded sets are exactly the bounded sets in the metric sense. The bounded coarse structure on $X$ is proper if and only if $X$ is \textit{proper} as a metric space, that is, every closed bounded subset of $X$ is compact. It is not difficult to show that two geodesic metric spaces with the bounded coarse structures are coarsely equivalent if and only if they are quasi-isometric.

For a locally compact metric space $(X,d)$, we can define a coarse structure other than the bounded structure, called the \textit{$C_0$ coarse structure} which is introduced by Wright~\cite{Wright}. In the $C_0$ coarse structure, a subset $E$ of $X\times X$ is defined to be controlled if for every $\varepsilon>0$ we can find a compact set $K\subset X$ such that $d(x,y)<\varepsilon$ for every $(x,y)\in E\setminus K\times K$. The following is proved for completeness.

\begin{proposition}\label{C0_is_proper}
Let $(X,d)$ be a locally compact metric space. Then, the above definition of the $C_0$ coarse structure indeed gives a coarse structure on $X$, where a subset is bounded if and only if it has compact closure. In case $X$ is separable, this structure is proper.
\end{proposition}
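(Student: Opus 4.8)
The verification of axioms (i), (ii), (iii), (v) is routine. For (i) any compact set (e.g.\ $\emptyset$) works since $d(x,x)=0$; for (ii), a compact $K$ with $d(x,y)<\varepsilon$ on $E\setminus K\times K$ does the same job for any $E'\subset E$; for (iii) one uses $d(x,y)=d(y,x)$ together with $\left(E\setminus K\times K\right)^{-1}=E^{-1}\setminus K\times K$; and for (v), if $K_E$ and $K_F$ witness control of $E$ and $F$ at a given $\varepsilon$, then $K_E\cup K_F$ witnesses control of $E\cup F$, the point being that a pair lying outside $K_E\cup K_F$ already lies outside whichever of $K_E,K_F$ is relevant.

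The real content is axiom (iv). The naive approach --- take a single compact $K$ controlling both $E$ and $F$ at scale $\varepsilon/2$ and apply $d(x,z)\le d(x,y)+d(y,z)$ to a witness $y$ of $(x,z)\in E\circ F$ --- fails: $(x,z)\notin K\times K$ only says that \emph{one} of $x,z$ escapes $K$, whereas to bound both summands one wants both of $x,z$ (or the witness $y$) pushed out of the relevant compacta. The fix rests on an auxiliary fact I would prove first: by local compactness, for every compact $L\subset X$ there is $\delta>0$ such that the closed $\delta$-neighborhood $\overline{\{x\in X:d(x,L)<\delta\}}$ is compact (cover $L$ by finitely many balls whose closures are compact, and let $\delta$ be less than a third of the smallest radius). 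Given $\varepsilon>0$, I would then choose compacta $K_E,K_F$ controlling $E,F$ at scale $\varepsilon/2$, set $L=K_E\cup K_F$, pick $\delta\le\varepsilon/2$ as above for $L$, choose further compacta controlling $E,F$ at the finer scale $\delta$, and take $K$ to be the union of all of these with the closed $\delta$-neighborhood of $L$. A short case analysis on whether a witness $y$ of a pair $(x,z)\in(E\circ F)\setminus K\times K$ lies in $L$ then finishes it: if $y\in L$, the endpoint of $(x,z)$ lying outside $K$ is forced, by control at scale $\delta$, into the $\delta$-neighborhood of $L$, hence into $K$, a contradiction; so $y\notin L$, and then $d(x,y)$ and $d(y,z)$ are each $<\varepsilon/2$ directly, giving $d(x,z)<\varepsilon$. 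I expect this bookkeeping to be the main obstacle.

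The characterization of bounded sets is then short. If $\overline B$ is compact, taking $K=\overline B$ makes the defining condition for controlledness of $B\times B$ vacuous. Conversely, if $B\times B$ is controlled, then for each $\varepsilon>0$ the associated compact $K_\varepsilon$ satisfies one of two alternatives: either $B\subset K_\varepsilon$, so $\overline B\subset K_\varepsilon$ is compact; or there is $x_0\in B\setminus K_\varepsilon$, and then every $y\in B$ has $(x_0,y)\notin K_\varepsilon\times K_\varepsilon$, hence $d(x_0,y)<\varepsilon$, so $\diam B\le 2\varepsilon$. If the first alternative holds for some $\varepsilon$ we are done; otherwise $\diam B\le 2\varepsilon$ for all $\varepsilon>0$, so $B$ is empty or a single point, again with compact closure.

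Finally, suppose $X$ is separable. A metric space is paracompact and Hausdorff, and condition (2) in the definition of a proper coarse structure is exactly what was just proved, so it remains to produce a controlled neighborhood of $\Delta_X$. Being separable, locally compact and metrizable, $X$ is second countable, so its one-point compactification $X^+=X\cup\{\infty\}$ is compact metrizable; fixing a metric $\rho$ on $X^+$ and putting $f(x)=\rho(x,\infty)$ gives a continuous $f\colon X\to(0,\infty)$ for which $\{f\ge\varepsilon\}$ is compact for every $\varepsilon>0$. I would then take $E=\{(x,y)\in X\times X:d(x,y)<\min(f(x),f(y))\}$, which is open and contains $\Delta_X$; and $E$ is controlled, since $(x,y)\in E$ with $d(x,y)\ge\varepsilon$ forces $f(x),f(y)>\varepsilon$, i.e.\ $(x,y)\in\{f\ge\varepsilon\}\times\{f\ge\varepsilon\}$, a compact subset of $X\times X$.
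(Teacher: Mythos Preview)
Your proof is correct and follows essentially the same strategy as the paper's: the same two-scale bootstrap through a compact neighborhood for axiom (iv), the same dichotomy for the bounded-set characterization, and the same controlled neighborhood $E=\{(x,y):d(x,y)<\min(f(x),f(y))\}$ built from a positive function $f$ vanishing at infinity. Your construction of $f$ as the distance to $\infty$ in a metric on the one-point compactification is a clean alternative to the paper's locally-finite-cover sum $f(x)=\sum_i\min\{2^{-i},d(x,X\setminus U_i)\}$, but otherwise the arguments coincide.
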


\begin{proof}
Let $(X,d)$ be a locally compact metric space.
It is easy to verify the conditions (i), (ii), (iii) and (v). To see (iv), take any controlled sets $E, F$ and $\varepsilon>0$. We prove that $E\circ F$ is also controlled. Since $E\cup F$ is controlled, we can choose a compact set $K_0$ of $X$ such that $d(x,y)<\varepsilon/2$ whenever $(x,y)\in (E\cup F)\setminus K_0\times K_0$. Since $X$ is locally compact, there is an $\varepsilon'>0$ with $\varepsilon'\leq\varepsilon/2$ such that the closed $\varepsilon'$-neighborhood $\overline{N}(K_0,\varepsilon')$ of $K_0$ is compact. Then, we can choose a compact set $K$ of $X$ containing $\overline{N}(K_0,\varepsilon')$ such that $d(x,y)<\varepsilon'$ whenever $(x,y)\in (E\cup F)\setminus K\times K$. 
 
We claim that $d(x,y)<\varepsilon$ holds for every $(x,y)\in (E\circ F)\setminus K\times K$.
Given $(x,y)\in (E\circ F)\setminus K\times K$, we can find a $z\in X$ such that $(x,z)\in E$ and $(z,y)\in F$. Since $(x,y)\notin K\times K$, either $x\notin K$ or $y\notin K$ holds. We first consider the case when $x\notin K$. Then, we see from $(x,z)\in E\setminus K\times K$ that $d(x,z)<\varepsilon'$. Since $\overline{N}(K_0,\varepsilon')\subset K$, we have $z\notin K_0$, and in particular, $(z,y)\in F\setminus K_0\times K_0$. This in turn implies that $d(z,y)<\varepsilon/2$, and hence $d(x,y)\leq d(x,z)+d(z,y)<\varepsilon'+\varepsilon/2\leq \varepsilon$. Since the case when $y\notin K$ can be treated in a similar way, the condition (iv) is verified.

It is clear from the definition of the $C_0$ coarse structure that every subset of $X$ with compact closure is bounded. To show the converse, let $B\subset X$ be a bounded set with respect to the $C_0$ structure, and suppose that $B$ does not have compact closure. Then, in particular, there are two distinct points $p, q\in B$, and we set the distance $\varepsilon=d(p,q)>0$. Since $B$ is bounded, the
square $B\times B$ is controlled, and hence there exists a compact set $K\subset X$ such that $d(x,y)<\varepsilon/2$ whenever $(x,y)\in B\times B\setminus K\times K$. Since the closure of $B$ is not compact, $B$ is not contained in $K$. Fix a point $r\in B\setminus K$ and observe that $(p,r), (q,r)\in B\times B\setminus K\times K$. This implies that $\varepsilon=d(p,q)\leq d(p,r)+d(q,r)<\varepsilon/2+\varepsilon/2=\varepsilon$, which is a contradiction. 

We further assume that $X$ is separable. To prove that the $C_0$ structure is proper, it remains only to show that there is a controlled neighborhood of the diagonal $\Delta_X$. Since $X$ is locally compact and separable metrizable, we can take a countable locally finite open cover $\{U_n\,|\;n\in\mathbb{N}\}$ such that each $U_n$ has compact closure. Then, we can define a continuous function $f\colon X\to (0,\infty)$ by
\[
f(x)=\sum_{i\in\mathbb{N}} \min\{2^{-i}, d(x, X\setminus U_i)\}.
\]
Then, it is easy to see that the function $f$ vanishes at infinity, that is, for all $\varepsilon>0$ there is a compact set $K\subset X$ such that $0<f(x)<\varepsilon$ for every $x\notin K$. This implies that the set
\[
E=\{(x,y)\in X\times X\,|\;d(x,y)<\min\{f(x), f(y)\}\}
\]
is a controlled neighborhood of $\Delta_X$. 
\end{proof}

Let $X=(X,\mathcal{E})$ be a coarse space. A bounded (not necessarily continuous) function $f\colon X\to \mathbb{R}$ is a \textit{Higson function} on $X$ if for every controlled set $E\in \mathcal E$ and $\varepsilon>0$ there is a bounded set $B\subset X$ such that $|f(x)-f(y)|<\varepsilon$ whenever $(x,y)\in E\setminus B\times B$.
The Higson functions on $X$ form a unital Banach algebra which is denoted by $B_h(X)$. 

A coarse space is usually equipped with a topology, and it makes sense to speak of continuous functions on the coarse space.  Let $X$ be a locally compact Hausdorff coarse space, and let $C_h(X)$ be the Banach algebra of \textit{continuous} Higson functions on $X$. Let $e:X\to \mathbb R^{C_h(X)}$ be an embedding into a product of lines defined by $e(x)=(f(x))_{f\in C_h(X)}$. Then, the compactification $hX=\cl_{\mathbb R^{C_h(X)}} e(X)$ of $X$ is homeomorphic to the maximal ideal space of $C_h(X)$.  We call $hX$ the \textit{Higson compactification} of $X$, and its boundary $\nu X=hX\setminus X$ is then called the \textit{Higson corona} of $X$.

The next lemma connects Higson functions and coarse maps. The proof is straightforward and left to the reader.

\begin{lemma}\label{higson_to_higson}
Let $X$ and $Y$ be locally compact Hausdorff coarse spaces satisfying the condition $(\star)$ and $f\colon X\to Y$ a coarse map. Then for every Higson function $\varphi$ on $Y$, the composition $\varphi\circ f$ is a Higson function on $X$. Consequently, $f$ induces a ring homomorphism $f^*\colon B_h(Y)\to B_h(X)$. If moreover $f$ is continuous,  $f$ induces $f^*\colon C_h(Y)\to C_h(X)$.
\qed
\end{lemma}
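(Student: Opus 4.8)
The plan is to verify the definition of a Higson function directly, by transporting controlled sets \emph{forward} along $f$ and bounded sets \emph{backward}. Given a Higson function $\varphi$ on $Y$, I would first note that $\varphi\circ f$ is bounded because $\varphi$ is, so only the oscillation condition remains. To check it, I would fix a controlled set $E\subset X\times X$ and $\varepsilon>0$, and use that $f$ is bornologous to conclude that $F:=(f\times f)(E)$ is controlled in $Y\times Y$. Applying the Higson property of $\varphi$ to $F$ and $\varepsilon$ produces a bounded set $B\subset Y$ with $|\varphi(y)-\varphi(y')|<\varepsilon$ for all $(y,y')\in F\setminus B\times B$.

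The key step is then to take $B':=f^{-1}(B)$: since $f$ is proper, $B'$ is bounded in $X$, and I would argue it witnesses the Higson property of $\varphi\circ f$ relative to $E$ and $\varepsilon$. Indeed, for $(x,x')\in E\setminus B'\times B'$ we have $(f(x),f(x'))\in F$, and at least one of $x,x'$ lies outside $B'$, hence at least one of $f(x),f(x')$ lies outside $B$, so $(f(x),f(x'))\in F\setminus B\times B$ and therefore $|\varphi(f(x))-\varphi(f(x'))|<\varepsilon$. This shows $\varphi\circ f\in B_h(X)$.

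For the remaining assertions I would observe that $f^*\colon\varphi\mapsto\varphi\circ f$ commutes with pointwise addition and multiplication, fixes the constant function $1$, and is contractive for the supremum norms (since $\sup_x|\varphi(f(x))|\le\sup_y|\varphi(y)|$), so it is a unital ring homomorphism $B_h(Y)\to B_h(X)$; and when $f$ is continuous, $\varphi\circ f$ is continuous for every continuous $\varphi$, so $f^*$ restricts to a homomorphism $C_h(Y)\to C_h(X)$.

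I do not anticipate a real obstacle here; the only point where the hypotheses enter is the boundedness of $B'=f^{-1}(B)$, which is precisely properness of $f$ (bornologousness supplying the rest), while the standing assumption $(\star)$ on $X$ and $Y$ merely guarantees that the associated Higson compactifications and coronas are well-behaved for the later arguments and is not needed in the computation above.
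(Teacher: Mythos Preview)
Your argument is correct and is precisely the routine verification the paper has in mind: the lemma is stated with a \qed{} and the surrounding text says ``the proof is straightforward and left to the reader,'' so there is no alternative approach to compare against. Your observation that condition $(\star)$ is not actually used in the computation is also accurate; the hypothesis is included only because the paper restricts attention to such spaces when discussing Higson coronas.
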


\begin{remark}\label{star_condition}
\normalfont
In the definition of Higson functions, we used the notion of bounded sets which is purely coarse one. In many cases a coarse space has a topology, and it is natural to assume that the bounded sets have some relation with the topology. For a locally compact Hausdorff coarse space $X$, we consider the following condition:
\[
\text{A subset of $X$ is bounded if and only if it has compact closure.}
\tag{$\star$}
\]

Hereafter we will consider the Higson corona of $X$ only when this condition is satisfied.
The condition $(\star)$ is satisfied by the following coarse structures: the bounded structures on proper metric spaces, the continuously controlled structures (defined below), the $C_0$ structures on locally compact metric spaces (Proposition \ref{C0_is_proper}), and all coarsely connected proper coarse spaces. 
\end{remark}

For a set $X$ and subsets $E\subset X\times X$ and $K\subset X$, we define $E[K]$ to be the set of $x\in X$ such that $(x,y)\in E$ for some $y\in K$. This set is the ``image'' of $K$ under $E$, where $E$ is considered to be a multivalued function from the \textit{second} coordinate to the \textit{first} coordinate. Now assume that $X$ has a topology. Then $E\subset X\times X$ is called \textit{proper} if each of $E[K]$ and $E^{-1}[K]$ has compact closure for every compact subset $K$ of $X$.

Let $X$ be a locally compact Hausdorff space with a (Hausdorff) compactification $\tilde{X}$. Denote the boundary $\tilde{X}\setminus X$ by $\partial X$. Then, since $X$ is locally compact, $X$ is open in $\tilde{X}$ and hence $\partial X$ is compact.
A subset $E\subset X\times X$ is then defined to be \textit{continuously controlled} by $\tilde{X}$ if one of (hence all of) the following three equivalent conditions is satisfied:  (a) the closure of $E$ in $\tilde{X}\times\tilde{X}$ intersects the complement of $X\times X$ only in the diagonal $\Delta_{\partial X}=\{(\omega,\omega)\,|\;\omega\in\partial X\}$, (b) $E$ is proper (in the sense defined in the previous paragraph), and for every net $\bigl((x_\lambda, y_\lambda)\bigr)$ in $E$, if $(x_\lambda)$ converges to $\omega\in\partial X$, then $(y_\lambda)$ also converges to $\omega$, (c) $E$ is proper, and for every point $\omega\in\partial X$ and every neighborhood $V$ of $\omega$ in $\tilde{X}$, there is a neighborhood $U\subset V$ of $\omega$ in $\tilde{X}$ such that $E\cap(U\times (X\setminus V))=\emptyset$. Then, the collection of all continuously controlled subsets is shown to be a coarse structure called the \textit{continuously controlled coarse structure} induced by $\tilde{X}$ (see \cite[Section 2.2]{Roe}). 

\begin{remark}\label{no_controlled_nbd}
\normalfont
For a continuously controlled structure, it is easy to see that the condition $(\star)$ is always satisfied,  while it may happen that  there is no controlled neighborhood of the diagonal, even if the space is paracompact. This means that such a structure need not be proper. (In \cite[Theorem 2.27]{Roe}, it is asserted that every continuously controlled structure on a paracompact space is proper, but the proof given there is actually incorrect, as pointed out by Berndt\nolinebreak { }Grave: see \cite{Roecorrection}.) 

As an example, let $X=[0,\infty)$ and consider the Stone-\v{C}ech compactification $\beta X$ of $X$. Let $U$ be any neighborhood of $\Delta_X$ in $X\times X$. For each $n\in\mathbb{N}$, let $a_n=n$ and take $b_n$ so that $0<b_n-a_n<2^{-1}$ and $(a_n, b_n)\in U$ are satisfied. Then $A=\{a_n\,|\,n\in\mathbb{N}\}$ and $B=\{b_n\,|\,n\in\mathbb{N}\}$ are disjoint closed subsets in $X$, and hence there exists a continuous map $f\colon X\to [0,1]$ with $f(A)=\{0\}$ and $f(B)=\{1\}$. This $f$ admits a continuous extension $\tilde{f}\colon\beta X\to [0,1]$ and we have $\cl_{\beta X} A\subset \tilde{f}^{-1}(0)$ and $\cl_{\beta X} B\subset \tilde{f}^{-1}(1)$. In particular, $\cl_{\beta X}A$ and $\cl_{\beta X} B$ are disjoint.
Since $A$ is noncompact, there exists a point $\omega\in (\cl_{\beta X} A)\setminus X$ and a net $(a_{n_\lambda})$ in $A$ convergent to $\omega$. Then the net $(b_{n_\lambda})$ has a subnet $(b_{n'_\mu})$ convergent to some point $\omega'\in \cl_{\beta X}B$. The corresponding subnet $(a_{n'_\mu})$ converges to $\omega$. Then $(a_{n'_\mu}, b_{n'_\mu})\in U$ and $(a_{n'_\mu}, b_{n'_\mu})\to (\omega, \omega')\notin \Delta_{\beta X\setminus X}$, showing that $U$ is not controlled. 
\end{remark}

In the rest of this section, we discuss how a noncontinuous coarse map between proper coarse spaces induces a continuous map between their Higson coronas.  The results will be applied to prove our main theorem (Theorem \ref{category_equivalence}).

For a proper coarse space $X$ satisfying $(\star)$, let  $B_0(X)$ denote the set of bounded, real-valued functions that vanish at infinity, in the sense that for all $\varepsilon>0$ there exists a compact set $K$ such that we have $|f(x)|<\varepsilon$ for all $x\in X\setminus K$. Let $C_0(X)$ denote the subalgebra of all continuous functions in $B_0(X)$.
The Banach algebra $C(\nu X)$ of real-valued continuous functions of the Higson corona is then isomorphic to $C_h(X)/C_0(X)$. There is a natural isomorphism $C_h(X)/C_0(X)\cong B_h(X)/B_0(X)$ by \cite[Lemma 2.40]{Roe}, and hence $C(\nu X)\cong B_h(X)/B_0(X)$. 

Now let $X$ and $Y$ be two proper coarse spaces satisfying $(\star)$ and $f\colon X\to Y$ a (not necessarily continuous) coarse map.
By Lemma \ref{higson_to_higson} there is an induced map $f^*\colon B_h(Y)\to B_h(X)$, and by the properness of $f$, we have $f^*(B_0(Y))\subset B_0(X)$. Therefore, we have a map $f^*\colon C(\nu Y)\cong B_h(Y)/B_0(Y)\to B_h(X)/B_0(X)\cong C(\nu X)$. Then, $\nu f\colon \nu X\to \nu Y$ is defined as the continuous map corresponding to the last $f^*$ by Gel'fand-Naimark duality. 
This makes the operation $\nu$ a functor, called the \textit{Higson corona functor}, from the category of proper coarse spaces to the category of compact Hausdorff spaces.

Of course, we can expect the map $\nu f$ to be a ``continuous extension'' of $f$ in some sense. In fact, we have the following:

\begin{proposition}\label{nuf}
Let $f\colon X\to Y$ be a coarse map between proper coarse spaces satisfying the condition $(\star)$. Then the map $\nu f\colon \nu X\to \nu Y$ is characterized by the property that $f\cup\nu f\colon hX\to hY$ is continuous at each point of $\nu X$.
\end{proposition}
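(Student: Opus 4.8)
The plan is to prove the two halves of the assertion separately: that $f\cup\nu f\colon hX\to hY$ is continuous at each point of $\nu X$, and that this property singles out $\nu f$ among all maps $\nu X\to\nu Y$.

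First I would make the definition of $\nu f$ concrete. Fix $\varphi\in C_h(Y)$ with continuous extension $\bar\varphi\colon hY\to\mathbb{R}$. Since the natural map $C_h(X)/C_0(X)\to B_h(X)/B_0(X)$ is an isomorphism, I can pick $\psi\in C_h(X)$ with $\psi-\varphi\circ f\in B_0(X)$. Tracing $\bar\varphi|_{\nu Y}\in C(\nu Y)$ through the chain $C(\nu Y)\cong C_h(Y)/C_0(Y)\cong B_h(Y)/B_0(Y)$, then $f^*\colon B_h(Y)/B_0(Y)\to B_h(X)/B_0(X)$, then $B_h(X)/B_0(X)\cong C_h(X)/C_0(X)\cong C(\nu X)$, and finally Gel'fand-Naimark duality, one reads off the identity $\bar\varphi\circ\nu f=\bar\psi|_{\nu X}$ on $\nu X$. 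This is pure bookkeeping with the defining isomorphisms, but it is the crux of the argument.

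Next I would set $F=f\cup\nu f\colon hX\to hY$, a priori merely a function, and fix $\varphi\in C_h(Y)$ and $\psi$ as above. Consider $h:=\bar\varphi\circ F-\bar\psi\colon hX\to\mathbb{R}$. On $X$ one has $\bar\varphi\circ F=\varphi\circ f$, so $h|_X=\varphi\circ f-\psi\in B_0(X)$; on $\nu X$ one has $\bar\varphi\circ F=\bar\psi|_{\nu X}$, so $h$ vanishes identically on $\nu X$. I claim $h$ is continuous at every $\xi\in\nu X$: given $\varepsilon>0$, choose a compact $K\subset X$ with $|h|<\varepsilon$ on $X\setminus K$; then $K$ is closed in $hX$ and disjoint from $\nu X$, so $hX\setminus K$ is an open neighborhood of $\xi$, and on it $|h-h(\xi)|=|h|<\varepsilon$ because $h(\xi)=0$ and $h\equiv 0$ on $\nu X$. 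Hence $\bar\varphi\circ F=\bar\psi+h$ is continuous at $\xi$. Since the topology of $hY$ is the one it inherits as a subspace of $\mathbb{R}^{C_h(Y)}$, i.e.\ the initial topology determined by the functions $\bar\varphi$, $\varphi\in C_h(Y)$, the continuity of every $\bar\varphi\circ F$ at $\xi$ forces $F$ to be continuous at $\xi$. This settles the first half.

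For the second half, suppose $g\colon\nu X\to\nu Y$ is such that $f\cup g$ is continuous at every point of $\nu X$, and fix $\xi\in\nu X$. Since $X$ is dense in $hX$, take a net $(x_\lambda)$ in $X$ with $x_\lambda\to\xi$. By the first half, $f(x_\lambda)=(f\cup\nu f)(x_\lambda)\to\nu f(\xi)$ in $hY$; by the hypothesis on $g$, $f(x_\lambda)=(f\cup g)(x_\lambda)\to g(\xi)$ in $hY$. As $hY$ is Hausdorff, $g(\xi)=\nu f(\xi)$, and $\xi$ being arbitrary, $g=\nu f$. The main obstacle is the first half, and within it the one genuinely non-formal point is the identity $\bar\varphi\circ\nu f=\bar\psi|_{\nu X}$ extracted from the defining isomorphisms; once that is in hand, the continuity statement reduces—via the initial-topology description of $hY$—to the elementary observation that a compact subset of $X$ is closed in $hX$ and misses $\nu X$.
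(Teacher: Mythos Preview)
Your argument is correct. Both your proof and the paper's rest on the same algebraic identity---for $\varphi\in C_h(Y)$ there is $\psi\in C_h(X)$ with $\varphi\circ f-\psi\in B_0(X)$ and $\bar\varphi\circ\nu f=\bar\psi|_{\nu X}$---but you organize the conclusion differently. The paper argues by contradiction: assuming a net $(x_\lambda)\to\omega\in\nu X$ with $f(x_{\lambda_\mu})\to\omega'\neq\nu f(\omega)$, it separates $\omega'$ from $\nu f(\omega)$ by $\tilde\varphi\in C(hY)$, invokes Tietze to extend $\tilde\varphi\circ\nu f$ over $hX$, and computes the limit of $\varphi\circ f(x_{\lambda_\mu})$ two ways. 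You instead exploit directly that $hY$ carries the initial topology from $\{\bar\varphi:\varphi\in C_h(Y)\}$, reducing continuity of $f\cup\nu f$ at $\xi$ to continuity of each $\bar\varphi\circ(f\cup\nu f)=\bar\psi+h$, where $h$ vanishes on $\nu X$ and lies in $B_0(X)$ on $X$; the observation that a compact $K\subset X$ is closed in $hX$ then finishes immediately. Your route is a bit more streamlined---no contradiction, no Tietze---and makes the role of the embedding $hY\hookrightarrow\mathbb{R}^{C_h(Y)}$ explicit; the paper's net argument is perhaps more hands-on. The uniqueness halves are identical.
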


\begin{proof}
We first show that $\nu f\colon \nu X\to\nu Y$ satisfies this property. Since $\nu f$ is continuous, we need only to show that for each  net $(x_\lambda)$ converging to a point $\omega\in \nu X$, the net $(f(x_\lambda))$ converges to $\nu f (\omega)$. If this is not the case, there exists a subnet $(x_{\lambda_\mu})$ of $(x_\lambda)$ such that $(f(x_{\lambda_\mu}))$ is convergent to $\omega'\in\nu Y\setminus \{\nu f(\omega)\}$. Then, there exists a continuous function $\tilde{\varphi}\colon hY\to \mathbb{R}$ with $\tilde{\varphi}(\nu f(\omega))=0$ and $\tilde{\varphi}(\omega')=1$, which restricts to a Higson function $\varphi=\tilde{\varphi}|_Y\in C_h(Y)\subset B_h(Y).$ Then, since $f$ is coarse, we have $\varphi\circ f\in B_h(X)$ by Lemma \ref{higson_to_higson}. Using Tietze's theorem, we can take a continuous extension $\psi\colon hX\to \mathbb{R}$ of $\tilde{\varphi}\circ (\nu f)\colon \nu X\to \mathbb{R}$. The definition of $\nu f$ yields that $\varphi\circ f-(\psi|_X)\in B_0(X)$. This implies, by the continuity of $\psi$,
\[
\lim \varphi\circ f(x_{\lambda_\mu})=\lim \psi(x_{\lambda_\mu})=\psi(\omega)=\tilde{\varphi}\circ (\nu f)(\omega)=0.
\]
On the other hand, by the continuity of $\tilde{\varphi}$,
\[
\lim \varphi\circ f(x{\lambda_\mu})=\tilde{\varphi}(\omega')=1,
\]
which is a contradiction.

The map $\nu f$ is uniquely determined by the property we have now demonstrated, since every point of $\nu X$ is a limit of some net in $X$. This completes the proof.
\end{proof}

\begin{remark}
\normalfont
The above proposition means that $\nu f$ is characterized by the fact that  $f\cup \nu f\colon (hX, \nu X)\to (hY, \nu Y)$ is \textit{eventually continuous} in the sense of  \cite[Definition 1.14]{CP} and \cite[Definition 2.4]{Rosenthal}, or is \textit{ultimately continuous} in the sense of \cite[Section 2]{HPR}. This observation is already made in the special case that  both $X$ and $Y$ are continuously controlled by some metrizable compactifications $\tilde{X}$ and $\tilde{Y}$, respectively~\cite{HPR}. In fact, the Higson compactifications $hX$ and $hY$  are equivalent to $\tilde{X}$ and $\tilde{Y}$ in this special case~\cite[Proposition 2.48]{Roe}.
\end{remark}

In some situation, it is also true that $f$ must be coarse whenever $f$ admits an extension as in the last proposition. For a precise statement we need the following notion: a map $f\colon X\to Y$ between coarse spaces is called \textit{pre-bornologous} if $f(B)\subset Y$ is bounded for every bounded set $B\subset X$. Notice that every bornologous map between coarse spaces is pre-bornologous.

\begin{proposition}\label{coarse_into_conti_controlled}
Let $X$ and $Y$ be proper coarse spaces satisfying $(\star)$ and $f\colon X\to Y$ a (not necessarily continuous) pre-bornologous map. Suppose that $Y$ has the continuously controlled coarse stucture induced by some compactification $\tilde{Y}$ of $Y$. Then, $f$ is coarse if and only if there exists $\tilde f\colon \nu X\to \nu Y$ (which is necessarily equal to $\nu f$) such that $f\cup\tilde{f} \colon hX\to hY$ is continuous at each point of $\nu X$.
\end{proposition}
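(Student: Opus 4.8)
The plan is to handle the two implications separately. For the ``only if'' direction, if $f$ is coarse then, since $X$ and $Y$ are proper coarse spaces satisfying $(\star)$, Proposition~\ref{nuf} directly furnishes $\nu f\colon\nu X\to\nu Y$ with $f\cup\nu f\colon hX\to hY$ continuous at each point of $\nu X$, so we take $\tilde f=\nu f$ (pre-bornologousness is automatic, since $f$ is bornologous). All the content is in the ``if'' direction: starting from a pre-bornologous $f$ admitting an extension $\tilde f$ as described, I must show $f$ is proper and bornologous.

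For this I will use two auxiliary facts. First, for the continuously controlled structure induced by $\tilde Y$, the identity of $Y$ extends to a continuous surjection $\pi\colon hY\to\tilde Y$ with $\pi(\nu Y)\subset\partial Y$; this comes from the inclusion $C(\tilde Y)|_Y\subset C_h(Y)$, which is straightforward to verify using condition~(c) of the definition of the continuously controlled structure (compare \cite[Proposition~2.48]{Roe}). Second, I will use the following property $(\dagger)$ of an arbitrary controlled set $E\subset X\times X$ in a proper coarse space satisfying $(\star)$: if $(a_\lambda,b_\lambda)$ is a net in $E$ with $a_\lambda\to\alpha$ and $b_\lambda\to\beta$ in $hX$ and $(\alpha,\beta)\notin X\times X$, then $\alpha=\beta\in\nu X$. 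To see $(\dagger)$, suppose first that $\beta\in X$ and take a compact --- hence, by $(\star)$, bounded --- neighborhood $N$ of $\beta$; the $b_\lambda$ eventually lie in $N$, and since $E[N]\times E[N]\subset E\circ(N\times N)\circ E^{-1}$ is controlled, $E[N]$ is bounded, so the $a_\lambda$ eventually lie in a relatively compact set and $\alpha\in X$, contradicting $(\alpha,\beta)\notin X\times X$; hence $\beta\in\nu X$, and symmetrically (applying the same to $E^{-1}$) $\alpha\in\nu X$. If moreover $\alpha\neq\beta$, choose $\varphi\in C_h(X)=C(hX)$ with $\varphi(\alpha)=0$ and $\varphi(\beta)=1$; since $a_\lambda\to\alpha\in\nu X$ the $a_\lambda$ eventually leave every bounded set, so eventually $(a_\lambda,b_\lambda)\in E\setminus B\times B$ for the bounded set $B$ provided by the Higson condition for $\varphi$ and $\varepsilon=1/3$, giving $|\varphi(a_\lambda)-\varphi(b_\lambda)|<1/3$ eventually, which contradicts $|\varphi(a_\lambda)-\varphi(b_\lambda)|\to 1$.

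Granting these, $f$ is proper: if $B\subset Y$ were bounded with $f^{-1}(B)$ unbounded, then $\cl_{hX}f^{-1}(B)$ would meet $\nu X$ at some $\omega$, which is the limit of a net $x_\lambda\in f^{-1}(B)$; continuity of $f\cup\tilde f$ at $\omega$ gives $f(x_\lambda)\to\tilde f(\omega)\in\nu Y$, while $f(x_\lambda)\in B$ forces this limit into $\cl_{hY}B=\cl_Y B\subset Y$ (using $(\star)$ for $Y$), a contradiction. For bornologousness, fix a controlled $E\subset X\times X$ and put $g:=\pi\circ(f\cup\tilde f)\colon hX\to\tilde Y$, so that $g|_X=f$, $g(\nu X)\subset\partial Y$, and $g$ is continuous at each point of $\nu X$. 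I verify condition~(a) for $F:=(f\times f)(E)$, namely that $\cl_{\tilde Y\times\tilde Y}F$ meets $(\tilde Y\times\tilde Y)\setminus(Y\times Y)$ only in $\Delta_{\partial Y}$: given $(p,q)$ in that intersection, choose $(a_\lambda,b_\lambda)\in E$ with $(f(a_\lambda),f(b_\lambda))\to(p,q)$ and pass to a subnet with $a_\lambda\to\alpha$ and $b_\lambda\to\beta$ in $hX$; if $\alpha\in X$ then, $f$ being pre-bornologous, the image under $f$ of a bounded neighborhood of $\alpha$ is bounded, hence relatively compact, in $Y$, so $p\in Y$, and likewise $\beta\in X$ implies $q\in Y$; since $(p,q)\notin Y\times Y$, at least one of $\alpha,\beta$ lies in $\nu X$, so $(\dagger)$ gives $\alpha=\beta\in\nu X$, and then continuity of $g$ at $\alpha=\beta$ yields $p=g(\alpha)=q\in\partial Y$, i.e.\ $(p,q)\in\Delta_{\partial Y}$. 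Hence $F$ is continuously controlled by $\tilde Y$ and $f$ is bornologous.

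A proper bornologous map is coarse, and once $f$ is known to be coarse the uniqueness assertion of Proposition~\ref{nuf} identifies $\tilde f$ with $\nu f$. The step I expect to be the main obstacle is establishing $(\dagger)$ --- especially the part ruling out that a controlled set links a point at infinity of $X$ with a point of $X$ --- together with setting up the map $\pi$ so that continuity of the given extension into $hY$ can be translated into the $\tilde Y$-based conditions that define the continuously controlled structure on $Y$.
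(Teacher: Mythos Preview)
Your proof is correct, and the ``only if'' direction and the properness step in the ``if'' direction match the paper's argument essentially verbatim. The bornologousness step, however, is organized differently.

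The paper verifies the \emph{net} characterization (condition~(b)) of continuous control for $F=(f\times f)(E)$: after first checking that $F$ is proper, it supposes some net $(f(x_\lambda))$ converges to $\omega\in\partial Y$ while $(f(x'_\lambda))$ does not, picks a separating $\tilde\varphi\colon\tilde Y\to[0,1]$, and derives a contradiction by showing that $\varphi\circ f$ both lies in $B_h(X)$ (because $\tilde\varphi\circ\pi\circ(f\cup\tilde f)$ gives an extension continuous at $\nu X$, and a Tietze argument reduces this to $C_h(X)+B_0(X)=B_h(X)$) and does not lie in $B_h(X)$ (because along $E$ it oscillates by at least $1/3$ outside every compact set). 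You instead isolate the auxiliary fact $(\dagger)$---that every controlled set in a proper coarse space satisfying $(\star)$ is continuously controlled by its Higson compactification---and then verify the \emph{closure} characterization (condition~(a)) for $F$ directly, pushing the net in $E$ up to $hX$, applying $(\dagger)$ to force $\alpha=\beta\in\nu X$, and reading off $p=q\in\partial Y$ from continuity of $g=\pi\circ(f\cup\tilde f)$ at that point. Your $(\dagger)$ is exactly \cite[Proposition~2.45(a)]{Roe}, which the paper invokes elsewhere (in the proof of Theorem~\ref{C0_is_topological}) but does not use here; your argument is arguably more transparent because it cleanly separates ``controlled in $X$ $\Rightarrow$ continuously controlled by $hX$'' from the transport along $f\cup\tilde f$, whereas the paper's contradiction via $\varphi\circ f\in B_h(X)\cap(B_h(X))^c$ entangles the two. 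The paper's approach, on the other hand, avoids setting up $(\dagger)$ as a separate lemma and works entirely with Higson functions, which is closer in spirit to the functional-analytic definition of $\nu f$. Both arguments rely on the same map $\pi\colon hY\to\tilde Y$; the paper cites it as \cite[Proposition~2.45(b)]{Roe} rather than 2.48.
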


\begin{proof}
The ``only if'' part is Proposition \ref{nuf}. We prove the ``if'' part. Suppose that there is a map $\tilde{f}\colon \nu X\to \nu Y$ as above. To see that $f$ is proper, it is enough to show that $f^{-1}(K)$ has compact closure in $X$ whenever $K\subset Y$ is compact, since both $X$ and $Y$ satisfy the condition $(\star)$. Let $K$ be a compact subset of $Y$. If $f^{-1}(K)$ does not have compact closure in $X$, then there exists a point $\omega\in\nu X\cap \cl_{hX} f^{-1}(K)$. Then we have $\tilde{f}(\omega)\in \nu Y$, but the continuity of $f\cup\tilde{f}$ at $\omega$ implies $\tilde{f}(\omega)\in \cl_{hY} K=K\subset Y$. This is a contradiciton, which means that $f^{-1}(K)$ has compact closure in $X$.

To prove that $f$ is bornologous, let $E$ be a controlled subset of $X\times X$ and consider the image $F=(f\times f)(E)\subset Y\times Y$. It is straightforward to show that $F$ is proper as a subset of $Y\times Y$, using the fact that $E$ is proper (see \cite[Proposition 2.23]{Roe}) and that $f$ is a proper, pre-bornologous map. Let $\bigl((f(x_\lambda), f(x'_\lambda))\bigr)$ be a net in $F$ with $(x_\lambda, x'_\lambda)\in E$ and $f(x_\lambda) \to \omega\in\tilde{Y}\setminus Y$.  
It remains to show that $f(x'_\lambda)\to\omega$.

Suppose that this is not the case. Then, there exist subnets $(x_{\lambda_\mu})$ and $(x'_{\lambda_\mu})$ (with the same index set) such that $f(x'_{\lambda_\mu})\to\omega'$ for some $\omega'\neq\omega$. We write $x_{\lambda_\mu}=x_\mu$, $x'_{\lambda_\mu}=x'_\mu$ to simplify notation. Choose a continuous function $\tilde{\varphi}\colon \tilde{Y}\to [0,1]$ such that $\tilde{\varphi}(\omega)=0$ and $\tilde{\varphi}(\omega')=1$, and let $\varphi$ denote the restriction $\tilde{\varphi}|_Y\colon Y\to [0,1]\subset \mathbb{R}$. 
By \cite[Proposition 2.45 (b)]{Roe}, there exists a continuous map $\pi\colon hY\to \tilde{Y}$ that restricts to the identity on $Y$.
Then, the composition $F=\tilde{\varphi}\circ \pi\circ (f\cup \tilde{f})\colon hX\to\mathbb{R}$ gives an extension of $\varphi\circ f$ over $hX$ which is continuous at each point in $\nu X$. By Tietze's theorem, there exists a continuous extension $G\colon hX\to\mathbb{R}$ of $\tilde{\varphi}\circ\pi\circ\tilde{f}=F|_{\nu X}$.  Then, we have $G|_X\in C_h(X)$ and $(G-F)|_X\in B_0(X)$, which in turn implies 
\[
\varphi\circ f=F|_X=G|_X-(G-F)|_X\in C_h(X)+B_0(X)=B_h(X).
\]
This causes a contradiction, since it can also be shown that $\varphi\circ f\notin B_h(X)$, as follows. Given a compact set $K\subset X$, we can take $\mu$ so large that $|\varphi\circ f(x_\mu)|<1/3$, $|\varphi\circ f(x'_\mu)-1|<1/3$, and $x_\mu\notin E[K]$. Then $x'_\mu\notin K$ and it follows that $(x_\mu, x'_\mu)\in E\setminus K\times K$ and $|\varphi\circ f(x_\mu)-\varphi\circ f(x'_\mu)|\geq 1/3$. This shows that $\varphi\circ f\notin B_h(X)$. 
\end{proof}

\section{$C_0$ and continuously controlled coarse structures}
In this section, \textit{all locally compact metric spaces are assumed to have the $C_0$ coarse structures. Controlled sets, coarse maps and Higson functions will be with respect to the $C_0$ structure.} For such structures, we first make clear how the notions of Higson functions and coarse maps are related to uniform continuity (Proposition \ref{C0_Higson=unif_conti}, Corollary \ref{coarse_char}). Then, we prove that the continuously controlled coarse structure induced by the Higson compactification is the original $C_0$ structure (Theorem \ref{C0_is_topological}).

\begin{proposition}\label{C0_Higson=unif_conti}
Let $(X,d)$ be a locally compact metric space. Then the continuous Higson functions on $X$ are exactly the bounded uniformly continuous functions on $X$.
\end{proposition}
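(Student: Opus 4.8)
The plan is to prove the two inclusions separately. For the easier direction, suppose $f\colon X\to\mathbb{R}$ is bounded and uniformly continuous; I want to show $f$ is a Higson function for the $C_0$ structure. Given a controlled set $E$ and $\varepsilon>0$, uniform continuity supplies a $\delta>0$ such that $d(x,y)<\delta$ implies $|f(x)-f(y)|<\varepsilon$. Since $E$ is $C_0$-controlled, there is a compact set $K$ with $d(x,y)<\delta$ for all $(x,y)\in E\setminus K\times K$; taking the bounded set $B=K$ gives $|f(x)-f(y)|<\varepsilon$ on $E\setminus B\times B$. Since bounded sets in the $C_0$ structure are exactly the relatively compact ones (Proposition \ref{C0_is_proper}), and $f$ is continuous, this shows $f\in C_h(X)$.

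For the converse, suppose $f\in C_h(X)$ but $f$ is \emph{not} uniformly continuous. Then there is an $\varepsilon>0$ and sequences $(x_n),(y_n)$ in $X$ with $d(x_n,y_n)\to 0$ but $|f(x_n)-f(y_n)|\geq\varepsilon$ for all $n$. The idea is to package these pairs into a single controlled set that witnesses the failure of the Higson condition. Concretely, I would consider $E=\Delta_X\cup\{(x_n,y_n),(y_n,x_n)\,|\,n\in\mathbb{N}\}$ and argue that $E$ is $C_0$-controlled: given any $\delta>0$, choose $N$ with $d(x_n,y_n)<\delta$ for $n\geq N$, and take $K$ to be a compact set containing the finitely many points $x_1,y_1,\dots,x_{N-1},y_{N-1}$; then every pair in $E$ outside $K\times K$ has distance $<\delta$. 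Now for the Higson property applied to this $E$ and this $\varepsilon$, there would have to be a bounded (i.e.\ relatively compact) set $B$ with $|f(x)-f(y)|<\varepsilon$ for all $(x,y)\in E\setminus B\times B$; but since $|f(x_n)-f(y_n)|\geq\varepsilon$ for all $n$, this forces $(x_n,y_n)\in B\times B$ for all $n$, hence all $x_n,y_n$ lie in the relatively compact set $B$.

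This is where the one remaining subtlety lies, and it is the step I expect to be the main obstacle: from the fact that $(x_n)$ and $(y_n)$ lie in a relatively compact set one must extract a genuine contradiction with $|f(x_n)-f(y_n)|\geq\varepsilon$. Passing to a subsequence, $x_n\to p$ for some $p\in\overline{B}$, and since $d(x_n,y_n)\to 0$ also $y_n\to p$; then continuity of $f$ gives $|f(x_n)-f(y_n)|\to|f(p)-f(p)|=0$, contradicting $|f(x_n)-f(y_n)|\geq\varepsilon$. (One should note that local compactness is not even needed for this direction beyond what is implicit, but it is harmless to keep it.) Combining the two directions gives the proposition. The only place where care is required is verifying that the set $E$ built from the bad sequences really is $C_0$-controlled, which as indicated reduces to the fact that finitely many points of a locally compact metric space sit inside a compact set — this is routine.
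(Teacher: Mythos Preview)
Your argument is correct and follows essentially the same route as the paper's proof: both directions match, and for the converse both proofs build the controlled set $E$ out of the ``bad'' pairs $(x_n,y_n)$ and exploit that a continuous function cannot oscillate by $\varepsilon$ along a sequence trapped in a compact set. The only cosmetic difference is organizational: the paper argues by contrapositive, first showing $\{x_n\}$ escapes every compact set (via uniform continuity on compacta) and then concluding $f$ is not Higson, whereas you argue by contradiction, using the assumed Higson property to trap all $x_n,y_n$ in a relatively compact $B$ and then invoking continuity at a limit point; the underlying compactness-plus-continuity mechanism is identical.
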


\begin{proof}
First assume that $f\colon X\to \mathbb{R}$ is bounded and uniformly continuous. Take any controlled set $E$ in the $C_0$ structure and $\varepsilon>0$. Then, we can choose a $\delta>0$ such that $d(x,y)<\delta$ implies $|f(x)-f(y)|<\varepsilon$, and then we can choose a compact set $K$ such that $(x,y)\in E\setminus K\times K$ implies $d(x,y)<\delta$. Then, $|f(x)-f(y)|<\varepsilon$ holds for every point $(x,y)\in E\setminus K\times K$. This proves that $f$ is a Higson function.

To show the converse, suppose that $f$ is continuous but not uniformly continuous. The latter condition means that there are $\varepsilon>0$ and sequences $(x_n)_{n\in\mathbb{N}}, (x'_n)_{n\in\mathbb{N}}$ in $X$ such that $d(x_n,x'_n)<1/n$ and $|f(x_n)-f(x'_n)|\geq \varepsilon$. Then, the set $\{x_n\,|\,n\in\mathbb{N}\}$ is not contained in any compact set. Indeed, if it were contained in a compact set, then the closure of $\{x_n, x'_n\,|\,n\in\mathbb{N}\}$ would be compact, where $f$ must be uniformly continuous, contrary to the choice of $(x_n)$ and $(x'_n)$.  To show that $f$ is not a Higson function, we first notice that the set $E=\{(x_n, x'_n)\,|\,n\in \mathbb{N}\}$ is controlled, and take any compact subset $K$ of $X$. As seen above, the set $\{x_n\,|\,n\in\mathbb{N}\}$ is not contained in $K$. Thus, we can find an $N$ such that $x_N\notin K$. This means $(x_N, x'_N)\in E\setminus K\times K$, but we have also that $|f(x_N)-f(x'_N)|\geq \varepsilon$. Therefore, $f$ is not a Higson function.
\end{proof}

In what follows, we give a characterization of coarse maps between locally compact metric spaces without assuming continuity. We recall from the last section that $f\colon X\to Y$ between coarse spaces is \textit{pre-bornologous} if for every bounded $B\subset X$ the image $f(B)$ is bounded.
Since locally compact metric spaces satisfy the condition $(\star)$ in Remark \ref{star_condition} by Proposition \ref{C0_is_proper}, we obtain the following:

\begin{lemma}\label{proper_in_C0}
Let $X$ and $Y$ be locally compact metric spaces and $f\colon X\to Y$ a (not necessarily continuous) map. Then, $f$ is proper if and only if $f^{-1}(K)$ has compact closure for every compact set $K$ of $Y$. Similarly, $f$ is pre-bornologous if and only if $f(K)$ has compact closure for every compact set $K$ of $X$.
\qed
\end{lemma}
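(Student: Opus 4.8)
The plan is to deduce this immediately from Proposition \ref{C0_is_proper}, which identifies the bounded subsets (in the $C_0$ coarse structure) with those having compact closure. Recall that by definition $f$ is proper if $f^{-1}(B)$ is bounded for every bounded $B\subset Y$, and pre-bornologous if $f(B)$ is bounded for every bounded $B\subset X$. So once we know ``bounded $\Leftrightarrow$ relatively compact'' on both sides, the two equivalences in the statement become reformulations in which ``bounded'' has been replaced by ``has compact closure'', and it only remains to pass between the two quantifiers ``for every bounded set'' and ``for every compact set''.

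For the properness equivalence, the forward direction is trivial: a compact set $K\subset Y$ is in particular bounded (it equals its own closure, which is compact), so $f^{-1}(K)$ is bounded, hence relatively compact by Proposition \ref{C0_is_proper}. For the converse, given a bounded $B\subset Y$, its closure $\overline{B}$ is compact, and $f^{-1}(B)\subset f^{-1}(\overline{B})$; by hypothesis $f^{-1}(\overline{B})$ has compact closure, and since $\overline{f^{-1}(B)}\subset \overline{f^{-1}(\overline{B})}$ is a closed subset of a compact set in a Hausdorff space, it is compact. Thus $f^{-1}(B)$ is relatively compact, hence bounded, so $f$ is proper. The pre-bornologous equivalence is handled in exactly the same fashion, using $f(B)\subset f(\overline{B})$ and monotonicity of closure in place of the above.

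I expect no real obstacle here; the only point requiring a word of care is the elementary topological fact that a subset of a relatively compact set is again relatively compact in a Hausdorff space, which follows from monotonicity of closure together with the fact that a closed subset of a compact set is compact. Everything else is a direct substitution using Proposition \ref{C0_is_proper}.
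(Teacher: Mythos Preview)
Your proposal is correct and matches the paper's approach exactly: the paper gives no proof at all, simply noting that the lemma follows from the condition $(\star)$ established in Proposition \ref{C0_is_proper} and marking the statement with \qed. You have merely spelled out the routine substitution and closure-monotonicity details that the paper leaves implicit.
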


\begin{proposition}\label{characterizing_coarse_in_C0}
Let $X$ and $Y$ be locally compact metric spaces and $f\colon X\to Y$ a (not necessarily continuous) proper, pre-bornologous map. The following are equivalent:
\begin{itemize}
\item[(a)] $f$ is a coarse map.
\item[(b)] For every $\varepsilon>0$, there exist a compact set $K\subset X$ and a $\delta>0$ such that $d(f(x),f(x'))<\varepsilon$ whenever $(x,x')\notin K\times K$ and $d(x,x')<\delta$. 
\end{itemize}
\end{proposition}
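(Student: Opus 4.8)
I will prove the two implications separately; (b)$\Rightarrow$(a) is routine, (a)$\Rightarrow$(b) less so. For (b)$\Rightarrow$(a): since $f$ is proper and pre-bornologous, and a coarse map is exactly a proper bornologous map, it is enough to show $f$ is bornologous. Let $E\subset X\times X$ be controlled and fix $\varepsilon>0$. By (b) choose a compact $K_1\subset X$ and $\delta>0$ with $d(f(x),f(x'))<\varepsilon$ whenever $(x,x')\notin K_1\times K_1$ and $d(x,x')<\delta$; since $E$ is controlled, choose a compact $K_2\subset X$ with $d(x,x')<\delta$ whenever $(x,x')\in E\setminus K_2\times K_2$. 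Put $K=K_1\cup K_2$ and $L=\overline{f(K)}$, which is compact by Lemma~\ref{proper_in_C0}. If $(f(x),f(x'))\in(f\times f)(E)$ lies outside $L\times L$, then $x\notin K$ or $x'\notin K$, so $(x,x')$ lies outside both $K_1\times K_1$ and $K_2\times K_2$; hence $d(x,x')<\delta$ and therefore $d(f(x),f(x'))<\varepsilon$. Thus $(f\times f)(E)$ is controlled.

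For (a)$\Rightarrow$(b) I argue contrapositively, constructing from the failure of (b) a controlled set $E$ whose image $(f\times f)(E)$ is not controlled, so that $f$ is not bornologous. Fix $\varepsilon_0>0$ witnessing the failure: for every compact $K\subset X$ and $\delta>0$ there is a pair $(x,x')$ with $(x,x')\notin K\times K$, $d(x,x')<\delta$ and $d(f(x),f(x'))\geq\varepsilon_0$. Recall the standard fact that a locally compact metric space is the topological sum of clopen $\sigma$-compact subspaces; fix such a decomposition $X=\bigsqcup_{i}X_i$ and, for each $i$, a compact exhaustion $L^i_1\subset\operatorname{int}(L^i_2)\subset L^i_2\subset\cdots$ of $X_i$. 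Now build inductively compact sets $K_0\subset K_1\subset\cdots$ with $K_{n-1}\subset\operatorname{int}(K_n)$ together with pairs $(x_n,x'_n)$: given $K_{n-1}$, apply the failure of (b) to this $K$ and $\delta=1/n$ and, after possibly interchanging $x_n$ and $x'_n$ (the three conditions are symmetric in them), assume $x_n\notin K_{n-1}$; let $X_{i(n)}$ be the summand containing $x_n$, and take $K_n$ to be a compact neighbourhood of $K_{n-1}\cup\{x_n,x'_n\}\cup L^{i(n)}_n$. Put $E=\{(x_n,x'_n)\mid n\in\mathbb N\}$.

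Three things must then be verified. First, $E$ is controlled: given $\eta>0$, pick $N$ with $1/N<\eta$; since $\{x_n,x'_n\mid n\leq N\}\subset K_N$, any $(x_n,x'_n)\in E\setminus K_N\times K_N$ has $n>N$ and hence $d(x_n,x'_n)<1/n<\eta$. Second, and this is the crucial point, $\{x_n\mid n\in\mathbb N\}$ is not precompact. The $x_n$ are pairwise distinct (as $x_n\notin K_{n-1}\supset\{x_j\mid j<n\}$), so if their closure were compact a subsequence $x_{n_k}$ would converge to some $p\in X$; but $p\notin\bigcup_m K_m$ (if $p\in K_m$ then $p\in\operatorname{int}(K_{m+1})$, forcing $x_{n_k}\in\operatorname{int}(K_{m+1})\subset K_{n_k-1}$ for large $k$, contradicting $x_{n_k}\notin K_{n_k-1}$), whereas the summand $X_i$ containing $p$ is open, so $x_{n_k}\in X_i$ for all large $k$, whence $i(n_k)=i$ and $L^i_{n_k}\subset K_{n_k}$ for such $k$; since $n_k\to\infty$ this gives $X_i=\bigcup_k L^i_{n_k}\subset\bigcup_m K_m$, so $p\in\bigcup_m K_m$ after all — a contradiction. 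Third, the conclusion: because $\{x_n\}$ is not precompact and $f$ is proper, $f(\{x_n\})$ is not precompact either (a compact $L\subset Y$ containing it would make $\{x_n\}\subset f^{-1}(L)$ precompact, by Lemma~\ref{proper_in_C0}); hence for each compact $L\subset Y$ some $f(x_n)\notin L$, so $(f(x_n),f(x'_n))\in(f\times f)(E)\setminus L\times L$ while $d(f(x_n),f(x'_n))\geq\varepsilon_0$, and therefore $(f\times f)(E)$ is not controlled.

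The step I expect to be the main obstacle is the non-precompactness of $\{x_n\}$. The obvious induction produces only $x_n\notin K_{n-1}$, which when $X$ is not $\sigma$-compact does not prevent the $x_n$ from accumulating at a point lying outside $\bigcup_m K_m$; arranging that the $K_n$ also exhaust the (necessarily countably many) $\sigma$-compact clopen summands actually met by the sequence is precisely what closes this gap. When $X$ is $\sigma$-compact — in particular whenever it is totally bounded, the setting of the paper's main results — one may simply draw the $K_n$ from a single exhaustion of $X$ and this complication vanishes.
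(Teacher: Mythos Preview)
Your proof is correct and follows the same overall strategy as the paper: (b)$\Rightarrow$(a) is handled identically, and (a)$\Rightarrow$(b) is argued contrapositively by building a controlled set $E=\{(x_n,x'_n)\}$ with $d(x_n,x'_n)<1/n$, $d(f(x_n),f(x'_n))\geq \varepsilon_0$, and $\{x_n\}$ not relatively compact, then using properness of $f$ to conclude $(f\times f)(E)$ is not controlled. The only real difference is the device used to secure non-precompactness of $\{x_n\}$. The paper fixes a locally finite open cover $(U_\lambda)$ of $X$ by relatively compact sets and at each step lets $K_{n+1}$ absorb every $D_\lambda=\overline{U_\lambda}$ meeting $K_n\cup\{x_n,x'_n\}$; if a subsequence $x_{n_k}$ converged to some $p$, then a single $D_\lambda$ containing $p$ would eventually contain two consecutive $x_{n_k}$, and the first of these forces $D_\lambda\subset K_{n_k+1}\subset K_{n_{k+1}}$, trapping the second and contradicting $x_{n_{k+1}}\notin K_{n_{k+1}}$. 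Your argument instead decomposes $X$ into $\sigma$-compact clopen summands with fixed exhaustions and makes $K_n$ swallow the $n$-th stage of the exhaustion of whichever summand $x_n$ lands in. Both mechanisms work; the paper's is slightly more self-contained (needing only local compactness plus paracompactness of metric spaces to produce the cover), while yours invokes the structure theorem for locally compact paracompact spaces but makes the $\sigma$-compact case --- the one actually used later in the paper --- transparently simpler, as you observe.
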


\begin{proof}
(b) $\Rightarrow$ (a): Assume (b) and let $f\colon X\to Y$ be a proper, pre-bornologous map. It is enough to show that $f$ is bornologous. Take any controlled set $E\subset X\times X$ and put $F=(f\times f)(E)$. To show that $F$ is controlled, take any $\varepsilon>0$. By (b), we can take a compact set $K\subset X$ and a $\delta>0$ such that $d(x,x')<\delta$ and $(x,x')\notin K\times K$ imply $d(f(x), f(x'))<\varepsilon$. Since $E$ is controlled, there is a compact set $K'\supset K$ such that $d(x,x')<\delta$ whenever $(x,x')\in E\setminus K'\times K'$. Then, by Lemma \ref{proper_in_C0}, $L=\cl_Y f(K')$ is compact, since $f$ is pre-bornologous. Let $(y,y')\in F\setminus L\times L$. Then, $(y,y')=(f(x),f(x'))$ for some $(x,x')\in E\setminus K'\times K'$. It follows that $d(x,x')<\delta$, and hence $d(y,y')=d(f(x),f(x'))<\varepsilon$, since $(x,x')\notin K\times K$.  

(a) $\Rightarrow$ (b): Assume that $f\colon X\to Y$ is proper and pre-bornologous, and that (b) is not the case. We then prove that $f$ is not bornologous to obtain a contradiction. There exists $r>0$ such that for each $n\in\mathbb{N}$ and each compact set $K\subset X$, we can take $x_{K,n}$ and $x'_{K,n}$, not both of which are in $K$, with $d(x_{K,n}, x'_{K,n})<1/n$ and $d(f(x_{K,n}), f(x'_{K,n}))\geq r$. We may exchange $x_{K,n}$ and $x_{K',n}$ if necessary to assume that $x_{K,n}\notin K$. Fix a locally finite cover $(U_\lambda)$ of $X$ by open sets $U_\lambda$ with compact closure $D_\lambda=\cl_X U_\lambda$. Let $K_1=\emptyset$ and inductively, define $K_{n+1}$ as the union of all $D_\lambda$ that intersects $K_n\cup\{x_{K_n, n}, x'_{K_n, n}\}$. Since $(D_\lambda)$ is locally finite, we see by induction that $K_n$ is compact for each $n$. Let us define $x_n=x_{K_n, n}$ and $x'_n=x'_{K_n,n}$. Notice that $K_n\subset K_{n+1}$, $x_n\notin K_n$ and $x_n, x'_n\in K_{n+1}$.

We show that the set $E=\{(x_n,x'_n)\,|\,n\in\mathbb{N}\}\subset X\times X$ is controlled. To see this, let $\varepsilon>0$. Take $N\in\mathbb{N}$ so large that $1/N<\varepsilon$ holds, and let $K=K_N$. If $(x_n, x'_n)\notin K\times K$, then it follows that $n\geq N$, and hence $d(x_n, x'_n)<1/n\leq 1/N<\varepsilon$. This shows that $E$ is controlled.

Next, we claim that, the set $\{x_n\,|\,n\in\mathbb{N}\}$ is not contained in any compact set. Indeed, if this set is contained in a compact set, then some subsequence $(x_{n_k})$ converges to a point $x_\infty\in X$, and $D_\lambda$ is a neighborhood of $x_\infty$ for some $\lambda$. Then, for a large $k$, both $x_{n_k}$ and $x_{n_{k+1}}$ are in $D_\lambda$. Since $x_{n_k}\in D_\lambda$, we have $D_\lambda\subset K_{n_{k}+1}$. Then, $x_{n_{k+1}}\in D_\lambda\subset K_{n_{k}+1}\subset K_{n_{k+1}}$ (using $n_{k}+1\leq n_{k+1}$), which is contrary to $x_{n_{k+1}}\notin K_{n_{k+1}}$. Thus, $\{x_n\,|\,n\in\mathbb{N}\}$ is not contained in any compact set. 

Finally, we show that $(f\times f)(E)=\{(f(x_n), f(x'_n))\,|\,n\in\mathbb{N}\}$ is not controlled to prove that $f$ is not bornologous (and hence not coarse). To this end, take any compact set $K\subset Y$. Then, by Lemma \ref{proper_in_C0}, $f^{-1}(K)$ has compact closure, and hence there is some $n$ such that $x_n\notin f^{-1}(K)$ by the last paragraph, which implies $(f(x_n), f(x'_n))\notin K\times K$. However, we have 
\[
d(f(x_n),f(x'_n))=d(f(x_{K_n,n}), f(x'_{K_n,n}))\geq r.
\]
Notice that $r>0$ is irrelevant to our choice of $K$. This means $(f\times f)(E)$ is not controlled.
\end{proof}

Since continuous maps between coarse spaces satisfying $(\star)$ are pre-bornologous, and are uniformly continuous on every compact set, we obtain the following corollary:

\begin{corollary}\label{coarse_char}
A continuous map between locally compact metric spaces is coarse with respect to the $C_0$ coarse structures if and only if it is proper and uniformly continuous.
\qed
\end{corollary}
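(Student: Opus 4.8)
The plan is to deduce this directly from Proposition \ref{characterizing_coarse_in_C0}, whose statement presupposes that the map is proper and pre-bornologous. So the first step is to observe that a continuous map $f\colon X\to Y$ between locally compact metric spaces is automatically pre-bornologous: by Lemma \ref{proper_in_C0} it suffices that $f(K)$ have compact closure for every compact $K\subset X$, and this holds because $f(K)$ is compact by continuity. Consequently, for a continuous $f$, being coarse is equivalent, by Proposition \ref{characterizing_coarse_in_C0}, to being proper and satisfying condition (b) there, and the whole corollary reduces to checking that, for a proper continuous map, condition (b) is equivalent to uniform continuity.

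For the ``if'' direction, assume $f$ is proper, continuous and uniformly continuous. Given $\varepsilon>0$, uniform continuity gives $\delta>0$ with $d(f(x),f(x'))<\varepsilon$ whenever $d(x,x')<\delta$; taking $K=\emptyset$ (or any compact set) then makes condition (b) of Proposition \ref{characterizing_coarse_in_C0} hold, so $f$ is coarse. For the ``only if'' direction, assume $f$ is coarse; it is proper by definition, and by Proposition \ref{characterizing_coarse_in_C0} condition (b) holds. Fix $\varepsilon>0$ and let $K\subset X$ be compact and $\delta_1>0$ be as in (b). Since $K$ is compact and $f$ is continuous, $f|_K$ is uniformly continuous, so there is $\delta_2>0$ such that $d(f(x),f(x'))<\varepsilon$ whenever $x,x'\in K$ and $d(x,x')<\delta_2$. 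Put $\delta=\min\{\delta_1,\delta_2\}$: if $d(x,x')<\delta$, then either $(x,x')\notin K\times K$, in which case (b) yields $d(f(x),f(x'))<\varepsilon$, or $x,x'\in K$, in which case the choice of $\delta_2$ yields the same. Hence $f$ is uniformly continuous.

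There is no substantial obstacle here; the only points needing a moment's care are the two preliminary observations that continuity implies pre-bornologousness (so that Proposition \ref{characterizing_coarse_in_C0} is applicable in the first place) and that $f$ is uniformly continuous on each compact set, together with the remark that in the last case split the pairs with $(x,x')\notin K\times K$ and the pairs with $x,x'\in K$ exhaust all of $X\times X$.
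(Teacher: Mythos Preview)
Your proof is correct and follows exactly the approach the paper intends: the sentence immediately preceding the corollary (``Since continuous maps between coarse spaces satisfying $(\star)$ are pre-bornologous, and are uniformly continuous on every compact set, we obtain the following corollary'') names precisely the two ingredients you use, and your write-up simply fills in the routine details of how they combine with Proposition~\ref{characterizing_coarse_in_C0}.
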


Let us consider the Higson compactification $h_0 X$ with respect to the $C_0$ structure. Then, in turn, $h_0 X$ induces a continuously controlled structure on $X$. As a generalization of \cite[Proposition 6]{CDKM}, we assert that this is the same as the original $C_0$ structure: 

\begin{theorem}\label{C0_is_topological}
The $C_0$ coarse structure on a locally compact metric space $X$ is equal to the continuously controlled structure induced by the Higson compactification $h_0 X$.
\end{theorem}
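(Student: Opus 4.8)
The plan is to prove the two coarse structures coincide by showing each controlled set of one structure is controlled in the other. Write $h_0X$ for the Higson compactification with respect to the $C_0$ structure, and $\nu_0 X = h_0X\setminus X$ for the corona. Recall from Proposition~\ref{C0_Higson=unif_conti} that the functions defining $h_0X$ are exactly the bounded uniformly continuous functions on $X$. The key auxiliary fact I would establish first is a metric characterization of convergence to a boundary point: for a net $(x_\lambda)$ in $X$ that leaves every compact set, and a point $\omega\in\nu_0X$, one has $x_\lambda\to\omega$ in $h_0X$ if and only if $f(x_\lambda)\to f^\vee(\omega)$ for every bounded uniformly continuous $f$, where $f^\vee$ denotes the (unique continuous) extension of $f$ to $h_0X$. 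This is just the definition of the topology on $h_0X$ as a subspace of $\mathbb{R}^{C_h(X)}$, but it is the workhorse for comparing the two structures.

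For the inclusion ``$C_0\subset$ continuously controlled'', I would take a set $E$ controlled in the $C_0$ structure and verify condition (b) of the definition of continuously controlled (the net condition): $E$ is proper (easy, since $E$ is $C_0$-controlled, so outside compact sets its sections are small, and using local compactness one gets $E[K]$ and $E^{-1}[K]$ relatively compact for compact $K$); and if $(x_\lambda,y_\lambda)\in E$ with $x_\lambda\to\omega\in\nu_0X$, then I must show $y_\lambda\to\omega$. Since $x_\lambda\to\omega$, the net $(x_\lambda)$ eventually leaves every compact set; given any bounded uniformly continuous $f$ and $\varepsilon>0$, choose $\delta$ for uniform continuity and then a compact $K$ with $d(u,v)<\delta$ for $(u,v)\in E\setminus K\times K$; eventually $x_\lambda\notin K$, so (as $E$ is proper, one can also arrange $y_\lambda\notin K$ or argue directly) $d(x_\lambda,y_\lambda)<\delta$ eventually, hence $|f(x_\lambda)-f(y_\lambda)|<\varepsilon$ eventually, so $f(y_\lambda)\to f^\vee(\omega)$; since this holds for all such $f$, $y_\lambda\to\omega$ by the characterization above. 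Hence $E$ is continuously controlled by $h_0X$.

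For the reverse inclusion ``continuously controlled $\subset C_0$'', I would argue by contraposition: suppose $E$ is \emph{not} $C_0$-controlled, so there is $\varepsilon>0$ such that for every compact $K$ there is $(x_K,y_K)\in E\setminus K\times K$ with $d(x_K,y_K)\geq\varepsilon$. Running this over a suitable exhaustion (as in the proof of Proposition~\ref{characterizing_coarse_in_C0}, using a locally finite cover by relatively compact opens to build compacta $K_n$), I extract a sequence $(x_n,y_n)\in E$ with $d(x_n,y_n)\geq\varepsilon$ and with, say, $\{x_n\}$ leaving every compact set. Pass to a subnet so that $x_n\to\omega\in\nu_0X$ and $y_n\to\omega'\in h_0X$. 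If $E$ were continuously controlled by $h_0X$, then by condition (a) of that definition the closure of $E$ in $h_0X\times h_0X$ meets the complement of $X\times X$ only in the diagonal of $\nu_0X$, so we would need $\omega'=\omega$ (after checking $\omega'\in\nu_0X$, which follows since $E$ is proper so $y_n$ also leaves every compact set). But the function $x\mapsto\min\{d(x,x_n'\text{-part}),\dots\}$ — more precisely, I would use the bounded uniformly continuous function $g(x)=\min\{\varepsilon/2,\,d(x,A)\}$ where $A=\{x_n\}$: it is $1$-Lipschitz hence uniformly continuous, bounded, $g(x_n)=0$ for all $n$ so $g^\vee(\omega)=0$, while $g(y_n)\geq$ something bounded away from $0$ along a further subnet (since $d(y_n,x_m)\geq$ a positive constant can be arranged, or at least $d(y_n,x_n)\geq\varepsilon$ forces $g(y_n)$ not to go to $0$ — this needs a little care and may require thinning $A$ so that its points are uniformly separated). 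Then $g^\vee(\omega')\neq 0=g^\vee(\omega)$, contradicting $\omega=\omega'$. Hence $E$ is not continuously controlled.

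The main obstacle I expect is the last step: arranging that the sequence $(x_n)$ can be thinned to a set $A$ whose points stay a definite distance apart and far from the corresponding $y_n$'s, so that the Lipschitz function $g(x)=\min\{c, d(x,A)\}$ genuinely separates $\omega$ from $\omega'$ in $h_0X$. One has $d(x_n,y_n)\geq\varepsilon$, but a priori $y_n$ could be close to some \emph{other} $x_m$. The fix is to choose the points inductively, discarding indices, so that the $x_n$ form an $\varepsilon/3$-separated set that also stays $\varepsilon/3$-away from all previously chosen $y_m$; since each $x_n$ leaves every compact set and balls are relatively compact (local compactness), only finitely many constraints are active at each stage, so the induction goes through. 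Once that combinatorial bookkeeping is done, the separation by $g$ and the resulting contradiction with condition (a) are immediate, and the theorem follows.
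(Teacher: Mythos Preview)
Your overall strategy matches the paper's: one inclusion is the general fact that any coarse structure sits inside the continuously controlled structure induced by its own Higson compactification (the paper simply cites Roe; your direct argument via uniformly continuous test functions is fine), and for the hard inclusion you aim to separate two boundary limit points by a bounded uniformly continuous function built from a sequence $(x_n,y_n)$ with $d(x_n,y_n)\geq\varepsilon$. You also correctly isolate the real obstacle: pointwise separation $d(x_n,y_n)\geq\varepsilon$ does not give $d(A,B)>0$ for the sets $A=\{x_{n_k}\}$, $B=\{y_{n_k}\}$, because some $y_n$ might be close to a different $x_m$.

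The gap is in your proposed fix for that obstacle. You claim that the inductive thinning succeeds because ``balls are relatively compact (local compactness), so only finitely many constraints are active at each stage.'' That implication is false for general locally compact metric spaces: local compactness only gives each point \emph{some} relatively compact neighborhood, not balls of a prescribed radius $\varepsilon/3$. For a concrete example, take $X=\mathbb{N}$ with $d(1,n)=1$ for $n>1$ and $d(n,m)=2$ for distinct $n,m>1$; this is discrete (hence locally compact), yet $B(1,3/2)=X$ is not relatively compact. In such situations infinitely many $x_n$ may sit inside $B(y_{n_j},\varepsilon/3)$ and your induction stalls at the very first step.

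The paper resolves exactly this point with a purely metric lemma (Lemma~\ref{r/3}) requiring no topology at all: from $d(x_n,x'_n)\geq r$ one can always extract subsequences with $d(\{x_{n_k}\},\{x'_{n_k}\})\geq r/3$. The key is a dichotomy you are missing. If for some fixed $m$ the set $\{i:d(x_i,x'_m)<r/3\}$ (or the symmetric set with $x$ and $x'$ swapped) is infinite, then \emph{those} indices already give the desired subsequence, since for any two such $i,j$ the triangle inequality forces $d(x_i,x'_j)\geq r/3$. If all such sets are finite, your inductive avoidance argument does go through, because now only finitely many indices are forbidden at each stage for a purely combinatorial reason, not a compactness one. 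Once $d(A,A')\geq r/3$ is in hand, either your function $g(x)=\min\{c,d(x,A)\}$ or the paper's $\varphi(x)=d(x,A)/(d(x,A)+d(x,A'))$ is bounded and Lipschitz, hence a Higson function, and separates the two boundary limits to yield the contradiction.
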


To show this theorem, the next lemma will be useful:

\begin{lemma}\label{C0controlled_criterion}
Let $X$ be a locally compact metric space and $E$ a subset of $X\times X$ with $E=E^{-1}$. Then, $E$ is controlled if and only if $d(x_n, x'_n)\to 0$ holds for every sequence $\bigl((x_n,x'_n)\bigr)_{n\in\mathbb{N}}$ in $E$ such that $(x_n)$ has no convergent subsequence.  
\end{lemma}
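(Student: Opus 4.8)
The plan is to prove both implications directly. The ``only if'' direction is a short compactness argument, while the ``if'' direction reuses the inductive ``escaping sequence'' construction from the proof of Proposition \ref{characterizing_coarse_in_C0}.

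\textbf{Necessity.} Suppose $E$ is controlled and let $\bigl((x_n,x'_n)\bigr)$ be a sequence in $E$ such that $(x_n)$ has no convergent subsequence; I claim $d(x_n,x'_n)\to 0$. If not, there are $\varepsilon>0$ and a subsequence with $d(x_{n_k},x'_{n_k})\ge\varepsilon$. Applying the definition of the $C_0$ structure to this $\varepsilon$ gives a compact $K\subset X$ with $d(x,y)<\varepsilon$ for every $(x,y)\in E\setminus K\times K$; hence each $(x_{n_k},x'_{n_k})$ lies in $K\times K$, so $(x_{n_k})$ is a sequence in the compact set $K$ and therefore has a convergent subsequence, contradicting the hypothesis on $(x_n)$. (Note that $E=E^{-1}$ is not used here.)

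\textbf{Sufficiency.} Assume $E=E^{-1}$ and that the stated limit condition holds, but, for contradiction, that $E$ is not controlled. Then there is a fixed $\varepsilon>0$ such that for every compact $K\subset X$ there exists $(x,y)\in E\setminus K\times K$ with $d(x,y)\ge\varepsilon$. I will construct a sequence $\bigl((x_n,x'_n)\bigr)$ in $E$ with $d(x_n,x'_n)\ge\varepsilon$ for all $n$ and with $(x_n)$ having no convergent subsequence; this contradicts the hypothesis. Exactly as in the proof of Proposition \ref{characterizing_coarse_in_C0}, fix a locally finite open cover $(U_\lambda)$ of $X$ with each $D_\lambda=\cl_X U_\lambda$ compact. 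Put $K_1=\emptyset$; given a compact $K_n$, choose $(p,q)\in E\setminus K_n\times K_n$ with $d(p,q)\ge\varepsilon$, and, swapping $p$ and $q$ using $E=E^{-1}$ if necessary, write it as $(x_n,x'_n)\in E$ with $x_n\notin K_n$. Then let $K_{n+1}$ be the union of all $D_\lambda$ that meet $K_n\cup\{x_n,x'_n\}$; by local finiteness of $(D_\lambda)$ this set is compact, $K_n\subset K_{n+1}$, and $x_n,x'_n\in K_{n+1}$.

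It remains to see that $(x_n)$ has no convergent subsequence. If $x_{n_k}\to x_\infty\in X$, choose $\lambda$ with $x_\infty\in U_\lambda$; then $x_{n_k}\in D_\lambda$ for all large $k$, so for some large $k$ both $x_{n_k}$ and $x_{n_{k+1}}$ lie in $D_\lambda$. Since $x_{n_k}\in D_\lambda$ and $n_k+1\le n_{k+1}$, we get $D_\lambda\subset K_{n_k+1}\subset K_{n_{k+1}}$, so $x_{n_{k+1}}\in K_{n_{k+1}}$, contradicting the construction. Hence $(x_n)$ has no convergent subsequence, while $d(x_n,x'_n)\ge\varepsilon$ for all $n$, contradicting the assumed criterion; therefore $E$ is controlled. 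The main obstacle is this sufficiency direction: simply picking witnesses against larger and larger compacta need not produce a sequence whose first coordinates escape to infinity (a priori they could accumulate at a point of $X$). The device of enlarging $K_n$ to $K_{n+1}$ by absorbing every cover element that touches the current data is precisely what forces the non-convergence, and the hypothesis $E=E^{-1}$ is what lets us always take the escaping coordinate to be the first one, so that the constructed sequence matches the form of the criterion.
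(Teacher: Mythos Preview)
Your proof is correct and follows essentially the same route as the paper: the ``only if'' direction is the straightforward compactness argument the paper dismisses as clear, and for the ``if'' direction you reproduce the same inductive escaping-sequence construction (locally finite cover, growing compacta $K_n$, swap via $E=E^{-1}$ to put the escaping coordinate first). The only cosmetic difference is that you enlarge $K_{n+1}$ using $K_n\cup\{x_n,x'_n\}$ while the paper uses $K_n\cup\{x_n\}$; this is harmless.
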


\begin{proof}
The ``only if'' part is clear. To show the ``if'' part, we use the construction in the proof of Proposition \ref{characterizing_coarse_in_C0} (a) $\Rightarrow$ (b), as follows. First choose a locally finite covering $(U_\lambda)_{\lambda\in\Lambda}$ of $X$ by open sets $U_\lambda$ with compact closure $D_\lambda=\cl_X U_\lambda$. Assume that $E=E^{-1}\subset X\times X$ is not controlled. Then, there exists $\varepsilon>0$ such that for each compact set $K\subset X$, we have $d(x_K, x'_K)\geq\varepsilon$ for some $(x_K, x'_K)\in E\setminus K\times K$. Here we can choose $(x_K, x'_K)$ so that $x_K\notin K$, since otherwise we can exchange $x_K$ and $x'_K$ using $E=E^{-1}$.

Let $K_1=\emptyset$, and indutively, define $K_{n+1}$ to be the union of all $D_\lambda$ that intersects $K_n\cup\{x_{K_n}\}$. Since $(D_\lambda)$ is locally finite, it follows by induction that $K_n$ is compact for each $n$. Put $x_n=x_{K_n}$ and $x'_n=x'_{K_n}$. Then, clearly, $(x_n, x'_n)\in E$.
Moreover, $(x_n)$ does not have a convergent subsequence. To see this, assume that a subsequence $(x_{n_k})$ converges to a point $x_\infty \in X$. Then, there exists a $\lambda$ such that $D_\lambda$ is a compact neighborhood of $x_\infty$. Take a large $k$ such that both of $x_{n_k}$ and $x_{n_{k+1}}$ belong to $D_\lambda$. Then $x_{n_{k+1}}\in D_\lambda\subset K_{n_k+1}\subset K_{n_{k+1}}$, which contradicts the choice of $x_{n_{k+1}}$. 
\end{proof}

The next lemma, also needed to prove Theorem \ref{C0_is_topological},  is valid for general metric spaces:

\begin{lemma}\label{r/3}
Let $(x_n)$ and $(x'_n)$ be sequences in a metric space $X$ and assume that $d(x_n, x'_n)\geq r$ for every $n\in\mathbb{N}$. Then, there exist subsequences $(x_{n_k})$ and $(x'_{n_k})$ such that $d(A, A')\geq r/3$, where
$A=\{x_{n_k}\,|\,k\in\mathbb{N}\}$ and $A'=\{x'_{n_k}\,|\,k\in\mathbb{N}\}$.
\end{lemma}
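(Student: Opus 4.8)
The plan is to construct the two subsequences greedily, with a single dichotomy according to whether the construction succeeds. Put $s=r/3$, so that $r=3s$ and $r-2s=s$. I will try to pick indices $n_1<n_2<\cdots$ one at a time so as to maintain the property $d(x_{n_i},x'_{n_j})\ge s$ for all $i,j$. The ``diagonal'' requirement is free, since $d(x_{n_i},x'_{n_i})\ge r>s$ by hypothesis; consequently, once $n_1,\dots,n_k$ have been chosen, the only obstruction to using some $m>n_k$ as $n_{k+1}$ is that $x_m$ lies within distance $s$ of one of $x'_{n_1},\dots,x'_{n_k}$, or that $x'_m$ lies within distance $s$ of one of $x_{n_1},\dots,x_{n_k}$. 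If the process never halts, the resulting subsequences satisfy $d(\{x_{n_i}:i\in\mathbb{N}\},\{x'_{n_i}:i\in\mathbb{N}\})\ge s=r/3$, and we are done.

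So suppose the process halts after $n_1,\dots,n_k$ have been chosen. Then every $m>n_k$ satisfies $d(x_m,x'_{n_j})<s$ for some $j\le k$ or $d(x'_m,x_{n_i})<s$ for some $i\le k$. These are $2k$ subsets of $\mathbb{N}$ whose union contains the infinite set $\{m:m>n_k\}$, so by the pigeonhole principle one of them, call it $T$, is infinite. Along $T$ we then have either that all points $x_m$ ($m\in T$) lie in a single ball of radius $s$, or that all points $x'_m$ ($m\in T$) do; in particular, one of the two sequences has diameter $<2s$ when restricted to $T$.

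It remains to observe that this ``clustered'' case is actually the favorable one, by the triangle inequality. Suppose for instance that $d(x_m,x_{m'})<2s$ for all $m,m'\in T$. Then for any $i,j\in T$ with $i\ne j$ we get $d(x_i,x'_j)\ge d(x_j,x'_j)-d(x_j,x_i)>r-2s=s$, while for $i=j$ the left-hand side is already $\ge r>s$; the case where instead the $x'$-terms cluster along $T$ is symmetric. Hence $T$ itself indexes subsequences with the required separation $\ge s=r/3$. This is precisely where the constant $r/3$ enters: we need $r-2s\ge s$ while keeping $s$ — the separation obtainable when the greedy process succeeds — as large as possible.

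I expect the only point requiring care to be the bookkeeping when the greedy process halts: recognizing that a ``clustering'' obstruction helps rather than hurts, extracting the correct infinite index set by pigeonhole, and checking that the diameter bound $2s$ combines with $d(x_n,x'_n)\ge r=3s$ to give separation exactly $s$. Everything else is routine. (One could instead derive the same dichotomy from the infinite Ramsey theorem applied to the finite colouring of pairs $\{m,m'\}$ that records which of the four distances among $x_m,x_{m'},x'_m,x'_{m'}$ fall below $s$, but the greedy argument seems more direct.)
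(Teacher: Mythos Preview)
Your proof is correct and follows essentially the same approach as the paper's: both split into the case where a greedy/inductive choice of indices succeeds and the case where infinitely many indices cluster within $r/3$ of a single fixed point, yielding the triangle-inequality estimate $d(x_i,x'_j)\ge r-2s=s$. The paper organizes the dichotomy by first defining the sets $I_n=\{i:d(x_n,x'_i)<r/3\}$ and $J_n=\{i:d(x_i,x'_n)<r/3\}$ and asking whether any one of them is infinite (if so, that set works; if not, the inductive construction never halts), whereas you run the greedy process first and invoke pigeonhole only when it halts---but the substance is identical.
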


\begin{proof}
For $n\in\mathbb{N}$ define the subsets $I_n, J_n$ of $\mathbb{N}$ as follows:
\begin{gather*}
I_n=\{i\in\mathbb{N}\,|\,d(x_n, x'_i)<r/3\},\\
J_n=\{i\in\mathbb{N}\,|\,d(x_i, x'_n)<r/3\}.
\end{gather*}
Then, for $i,j\in I_n$, we have 
\[
d(x_i, x'_j)\geq r/3.
\tag{$\clubsuit$}
\]
Indeed, $d(x'_i, x'_j)\leq d(x'_i, x_n)+d(x_n, x'_j)<2r/3$, and hence $d(x_i, x'_j)\geq d(x_i, x'_i)-d(x'_i, x'_j)\geq r-2r/3=r/3$, as desired. Similarly, the inequality ($\clubsuit$) also holds for $i, j\in J_n$. Thus if $I_n$ (or $J_n$) is infinite for some $n$, the enumeration $I_n=\{n_k\,|\,k\in\mathbb{N}\}$ (or $J_n=\{n_k\,|\,k\in\mathbb{N}\}$) with $n_1<n_2<\cdots$ gives the desired subsequences $(x_{n_k})$ and $(x'_{n_k})$. We are left with the case where $I_n$ and $J_n$ are finite for all $n$.

We inductively construct a sequence $(n_k)$ which will give the desired subsequences. Let $n_1=1$, and suppose that we have constructed $n_1<\cdots <n_{k-1}$ satisfying $d(x_{n_i}, x'_{n_j})\geq r/3$ for every $i,j<k$. Notice that the set $S=\bigcup_{i<k} I_{n_i}\cup \bigcup_{i<k} J_{n_i}$ is finite. We define $n_k\in\mathbb{N}$ so that $n_k$ does not belong to $S$ and is greater than $n_{k-1}$. Then, we have $d(x_{n_k}, x'_{n_i})\geq r/3$ and $d(x_{n_i}, x'_{n_k})\geq r/3$ for each $i<k$. This completes the inductive construction.
\end{proof}

\begin{proof}[Proof of Theorem \ref{C0_is_topological}]
By \cite[Proposition 2.45 (a)]{Roe}, every $C_0$ controlled set is continuously controlled by $h_0 X$. To show the converse, let $E\subset X\times X$ be a subset continuously controlled by $h_0 X$. We may replace $E$ by $E\cup E^{-1}$ to assume that $E=E^{-1}$. To apply Lemma \ref{C0controlled_criterion} to $E$, let $\bigl((x_n, x'_n)\bigr)$ be a sequence in $E$ such that $(x_n)$ has no convergent subsequence, and suppose that $d(x_n, x'_n)\to 0$ does not hold. Then, passing to subsequences, we can find $r>0$ such that $d(x_n, x'_n)\geq r$ for every $n$. By Lemma \ref{r/3}, we can further pass to subsequences to obtain $d(A, A')\geq r/3$, where $A=\{x_n\,|\,n\in\mathbb{N}\}$ and $A'=\{x'_n\,|\,n\in\mathbb{N}\}$. Now define $\varphi\colon X\to \mathbb{R}$ by
\[
\varphi(x)=\frac{d(x,A)}{d(x,A)+d(x,A')}.
\]
Notice that $\varphi(A)=\{0\}$ and $\varphi(A')=\{1\}$. 
The function $\varphi$ is uniformly continuous and bounded, and hence is a Higson function by Proposition \ref{C0_Higson=unif_conti}. Thus, $\varphi$ admits a continuous extension $\tilde{\varphi}\colon h_0X\to\mathbb{R}$. 

On the other hand, we can take a subnet $(x_{n_\lambda})$ of $(x_n)$ such that $x_{n_\lambda}\to\omega$ for some $\omega\in h_0 X\setminus X$. Since $E$ is continuously controlled by $h_0 X$, we have $x'_{n_\lambda}\to\omega$. However, we then obtain
\[
0=\lim \varphi(x_{n_\lambda})=\tilde{\varphi}(\omega)=\lim \varphi(x'_{n_\lambda})=1,
\]
which is a contradiction.
\end{proof}

\section{$C_0$ coarse structures on totally bounded spaces}

The \textit{Smirnov compactification} $uX$ of a metric space $X$ is defined as the maximal ideal space of the unital Banach algebra $C_u(X)$ of real-valued bounded uniformly continuous functions. Thus, a bounded continuous function $\varphi\colon X\to\mathbb{R}$ is extendable continuously over $uX$ if and only if it is uniformly continuous, and any compactification with this property is equivalent to $uX$. Here, two compactifications $\gamma X$ and $\delta X$ of a space $X$ are called \textit{equivalent} if there exists a homeomorphism $h\colon \gamma X\to\delta X$ such that $h|_X=\id$. Proposition \ref{C0_Higson=unif_conti} immediately implies the following:

\begin{proposition}\label{smirnov=higson}
For any locally compact metric space $X$, the Smirnov compactification $uX$ of $X$ is equivalent to the Higson compactification of $X$ with respect to the $C_0$ coarse structure. \qed
\end{proposition}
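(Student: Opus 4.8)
The plan is to read off this proposition from Proposition~\ref{C0_Higson=unif_conti} together with the standard fact that a compactification of a Tychonoff space is determined, up to equivalence, by the subalgebra of bounded continuous functions that extend continuously over it. Since $X$ is a metric space it is Tychonoff, so both compactifications $uX$ and $h_0X$ are available, and the whole point is that they are built from the same algebra of functions.

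First I would recall the two descriptions. The Smirnov compactification $uX$ is, by definition, the maximal ideal space of $C_u(X)$, the algebra of bounded uniformly continuous real-valued functions on $X$; equivalently, by Gel'fand--Naimark duality, $uX$ is (equivalent to) the closure of the image of the evaluation embedding $x\mapsto (f(x))_{f\in C_u(X)}$ into $\mathbb{R}^{C_u(X)}$, and it is characterized among all compactifications of $X$ by the property that a bounded continuous function on $X$ extends continuously over it precisely when the function is uniformly continuous. On the other hand, the Higson compactification $h_0X$ with respect to the $C_0$ coarse structure is, by its very construction in \S2, the closure of the image of $x\mapsto (f(x))_{f\in C_h(X)}$ in $\mathbb{R}^{C_h(X)}$, where $C_h(X)$ denotes the algebra of continuous Higson functions for the $C_0$ structure.

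Now I would invoke Proposition~\ref{C0_Higson=unif_conti}, which asserts exactly that $C_h(X)=C_u(X)$ as subalgebras of the bounded continuous functions on $X$: a bounded continuous function is a Higson function for the $C_0$ structure if and only if it is uniformly continuous. Consequently the two evaluation embeddings above are literally the same map into the same product of lines, so their closures coincide; equivalently, the algebra of bounded continuous functions extending over $h_0X$ equals the algebra of such functions extending over $uX$, and the identity on $X$ therefore extends to a homeomorphism $h_0X\to uX$ fixing $X$ pointwise, i.e.\ an equivalence of compactifications.

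I do not expect any real obstacle: all of the substance resides in Proposition~\ref{C0_Higson=unif_conti}, and the passage from ``same algebra of extendable functions'' to ``equivalent compactifications'' is routine (it is the uniqueness half of the Gel'fand correspondence for unital algebras of bounded continuous functions, and is in any case how $uX$ was characterized in the first place). The only point worth stating explicitly is this uniqueness, so that the identification $C_h(X)=C_u(X)$ can be upgraded from an equality of function algebras to an equivalence $h_0X\simeq uX$.
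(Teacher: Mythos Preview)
Your proposal is correct and matches the paper's approach exactly: the paper simply notes that Proposition~\ref{C0_Higson=unif_conti} immediately implies the result (hence the \qed with no further argument), and your write-up just spells out the routine passage from $C_h(X)=C_u(X)$ to equivalence of the corresponding compactifications.
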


On the other hand, there is a useful characterization of the Smirnov compactification of a general metric space:

\begin{theorem}\cite[Theorem 2.5]{Woods}\label{smirnov_criterion}
Let $\gamma X$ be a (Hausdorff) compactification of a metric space $X=(X,d)$. Then, $\gamma X$ is equivalent to the Smirnov compactification $uX$ if and only if $\cl_{\gamma X} A\cap \cl_{\gamma X}B=\emptyset$ for all subsets $A, B\subset X$ with $d(A, B)>0$. \qed
\end{theorem}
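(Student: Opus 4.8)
The plan is to translate the statement into the language of function algebras. A (Hausdorff) compactification $\gamma X$ of $X$ is determined by, and determines, the closed unital subalgebra $A_\gamma=C(\gamma X)|_X\subset C_b(X)$ of restrictions of its continuous functions; two compactifications are equivalent exactly when these subalgebras coincide, and the Smirnov compactification $uX$ corresponds to $A_u=C_u(X)$. So the theorem amounts to the assertion that the stated closure condition holds if and only if $A_\gamma=C_u(X)$. I will also use the standard criterion that a bounded continuous $f\colon X\to\mathbb R$ lies in $A_\gamma$ if and only if $\cl_{\gamma X}\{x:f(x)\le a\}\cap\cl_{\gamma X}\{x:f(x)\ge b\}=\emptyset$ for all reals $a<b$; this, together with Smirnov's correspondence between compactifications and proximities, carries the real point-set content.

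\emph{Only if.} Assume $A_\gamma=C_u(X)$ and let $A,B\subset X$ with $d(A,B)=\delta>0$. The function $f(x)=\min\{1,\delta^{-1}d(x,A)\}$ is bounded and $\delta^{-1}$-Lipschitz, hence uniformly continuous, so it extends to $\tilde f\in C(\gamma X)$; since $\tilde f\equiv 0$ on $\cl_{\gamma X}A$ and $\tilde f\equiv 1$ on $\cl_{\gamma X}B$, these closed sets are disjoint. This is the implication the present paper actually needs, e.g.\ to identify $uX$ with the completion of a totally bounded space.

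\emph{If.} Assume the closure condition. To obtain $C_u(X)\subset A_\gamma$, take a bounded uniformly continuous $f$ and reals $a<b$; uniform continuity gives $\delta>0$ with $|f(x)-f(x')|<b-a$ whenever $d(x,x')<\delta$, so $d(\{f\le a\},\{f\ge b\})\ge\delta>0$ and the two $\gamma X$-closures are disjoint by hypothesis; the extension criterion then yields $f\in A_\gamma$. For the reverse inclusion $A_\gamma\subset C_u(X)$, suppose $f\in A_\gamma$ has extension $\tilde f\in C(\gamma X)$ but $f$ is not uniformly continuous: pick $\varepsilon>0$ and $x_n,x'_n\in X$ with $d(x_n,x'_n)\to0$ and $|f(x_n)-f(x'_n)|\ge\varepsilon$, pass to a subsequence along which $f(x_n)$ and $f(x'_n)$ converge to limits at distance $\ge\varepsilon$, and fix $a<b$ with $f(x_n)\le a$ and $f(x'_n)\ge b$ for all large $n$. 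Then for the tail sets $A=\{x_n\}$ and $B=\{x'_n\}$ one has $\tilde f\le a$ on $\cl_{\gamma X}A$ and $\tilde f\ge b$ on $\cl_{\gamma X}B$, so these closures are disjoint, whence $d(A,B)>0$ — contradicting $d(x_n,x'_n)\to0$. Hence $A_\gamma=C_u(X)$, and $\gamma X$ is equivalent to $uX$.

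The heart of the argument is the extension criterion invoked above: a clean route is to observe that $A_\gamma$ is a closed sublattice of $C_b(X)$ containing the constants, that such sublattices are classified by the pairs of subsets they separate, and that for $A_\gamma$ those are precisely the pairs $(S,T)$ with $\cl_{\gamma X}S\cap\cl_{\gamma X}T=\emptyset$ (Smirnov's proximity/compactification duality). One further point deserves care: in the reverse inclusion $A_\gamma\subset C_u(X)$ the conclusion $d(A,B)>0$ is extracted from the \emph{disjointness} of the $\gamma X$-closures, i.e.\ from the converse of the literal hypothesis, so the closure condition should be read as the full equivalence $\cl_{\gamma X}A\cap\cl_{\gamma X}B=\emptyset\iff d(A,B)>0$ — equivalently, as the statement that the proximity of $\gamma X$ is the metric proximity. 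For the totally bounded compactifications that arise in this paper this is costless, since such a compactification is automatically a quotient of $uX$, which forces the missing implication.
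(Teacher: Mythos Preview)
The paper does not prove this result; it is quoted from Woods with a \qed, so there is no ``paper's own proof'' to compare against.

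Your argument is sound, and in fact you have put your finger on a genuine defect in the statement as transcribed in the paper. With the paper's definition of $uX$ (the maximal ideal space of $C_u(X)$), the one-directional closure condition is \emph{not} sufficient to force $\gamma X\equiv uX$: for $X=\mathbb{R}$, the Stone--\v{C}ech compactification $\beta X$ satisfies the condition (any $A,B$ with $d(A,B)>0$ are separated by the bounded Lipschitz function $x\mapsto\min\{1,d(x,A)/d(A,B)\}$, which extends to $\beta X$), yet $\beta\mathbb{R}\not\equiv u\mathbb{R}$ because $\sin(x^2)\in C_b(\mathbb{R})\setminus C_u(\mathbb{R})$. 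So the ``if'' direction of the theorem is false as written. Your diagnosis is exactly right: the hypothesis must be read as the full equivalence $\cl_{\gamma X}A\cap\cl_{\gamma X}B=\emptyset\iff d(A,B)>0$, i.e.\ that the proximity of $\gamma X$ coincides with the metric proximity, and for that corrected statement your proof via the Ta\u{\i}manov-type extension criterion and the subsequence argument goes through.

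Your final remark is also correct and is what rescues the paper's applications: in Corollary~\ref{compact_metric_smirnov} the compactification in play is a compact \emph{metric} space containing $X$, so every $f\in A_\gamma$ is the restriction of a continuous function on a compact metric space and is therefore uniformly continuous on $X$; the inclusion $A_\gamma\subset C_u(X)$ is then automatic, and only the one-directional closure condition (which your ``only if'' paragraph and your $C_u(X)\subset A_\gamma$ argument handle) is actually needed.
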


\begin{corollary} \label{compact_metric_smirnov}
For any compact metric space $X=(X,d)$ and its dense subspace $Y$, the space $X$ coincides with the Smirnov compactification $uY$. If moreover $Y$ is locally compact (or equivalently, open in $X$), then the $C_0$ structure on $Y$ coincides with the continuously controlled structure induced from $X$, and $X$ is the Higson compactification for this structure.
\end{corollary}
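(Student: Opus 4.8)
The plan is to deduce everything from the results already assembled in the excerpt, treating the corollary as a bookkeeping exercise. First I would establish the claim that $X = uY$. By Theorem~\ref{smirnov_criterion} (Woods' criterion), it suffices to check that for all $A, B \subset Y$ with $d(A,B) > 0$ we have $\cl_X A \cap \cl_X B = \emptyset$, where the closures are taken in the ambient compact space $X$. This is a routine metric-topology fact: if $p \in \cl_X A \cap \cl_X B$, pick sequences $a_n \in A$ and $b_n \in B$ with $a_n \to p$ and $b_n \to p$ in $X$; then $d(a_n, b_n) \to 0$ in the metric $d$ of $X$ (restricting to $Y$), contradicting $d(A,B) > 0$. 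Hence $X$ is a compactification of $Y$ satisfying Woods' condition, so $X$ is equivalent to $uY$; since $Y$ is dense in $X$, the identity on $Y$ is the required homeomorphism, and we may simply write $X = uY$.

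Next, assume additionally that $Y$ is locally compact. Local compactness of a dense subspace of a compact (hence locally compact) Hausdorff space is equivalent to $Y$ being open in $X$, which I would note in passing. Now apply Proposition~\ref{smirnov=higson}: for the locally compact metric space $Y$, the Smirnov compactification $uY$ is equivalent to the Higson compactification $h_0 Y$ of $Y$ with respect to the $C_0$ coarse structure. Combining with the first part, $h_0 Y = uY = X$ (all equivalences fixing $Y$ pointwise), so $X$ \emph{is} the Higson compactification of $Y$ for the $C_0$ structure.

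Finally, for the statement about coarse structures, invoke Theorem~\ref{C0_is_topological}: the $C_0$ coarse structure on the locally compact metric space $Y$ equals the continuously controlled structure induced by $h_0 Y$. Since $h_0 Y = X$ as compactifications of $Y$ (and the continuously controlled structure depends only on the compactification up to equivalence), the $C_0$ structure on $Y$ coincides with the continuously controlled structure induced from $X$. This completes the proof. I do not anticipate a genuine obstacle here: the only mildly delicate point is verifying Woods' criterion for $X$ over $Y$, and that is a short argument with convergent sequences; everything else is a direct chaining of Proposition~\ref{smirnov=higson}, Proposition~\ref{C0_Higson=unif_conti} (via Theorem~\ref{smirnov_criterion}) and Theorem~\ref{C0_is_topological}. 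One should take a little care to phrase all the compactification identifications as equivalences fixing $Y$ pointwise, so that the continuously controlled structures genuinely match rather than merely being isomorphic.
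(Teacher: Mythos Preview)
Your proposal is correct and follows essentially the same route as the paper's own proof: verify Woods' criterion (Theorem~\ref{smirnov_criterion}) to get $X=uY$, then apply Proposition~\ref{smirnov=higson} to identify $X$ with the Higson compactification $h_0Y$, and finally invoke Theorem~\ref{C0_is_topological} to match the $C_0$ and continuously controlled structures. The only difference is that the paper leaves the verification of Woods' criterion as ``immediate'' while you spell out the sequence argument; both are entirely adequate.
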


\begin{proof}
The first half of the statement is immediate from Theorem \ref{smirnov_criterion}. If $Y$ is locally compact, we can consider the $C_0$ structure on $Y$ with respect to the metric $d$ induced from $X$, as well as the continuously controlled structure on $Y$ induced by $X$. Then, by Proposition \ref{smirnov=higson}, $X=uY$ is the Higson compactification of $Y$ for the $C_0$ structure. Finally, it follows from Theorem \ref{C0_is_topological} that the continuously controlled structure on $Y$ induced by $X=uY$ is equal to the $C_0$ structure.
\end{proof}

To state our main result, we define two categories.
Let $\mathbf{K}$ be the category of compact metrizable spaces and continuous maps. We define another category $\mathbf{TB}$ as follows:  the objects of $\mathbf{TB}$ are totally bounded locally compact metric spaces with the $C_0$ coarse structures. The set $\Hom_\mathbf{TB}(X,Y)$ of morphisms between objects $X$ and $Y$ consists of the equivalence classes of coarse maps by the equivalence relation~$\sim$, where $f \sim g$ if $f$ and $g$ are close (that is, $\{(f(x),g(x))\,|\,x\in X\}$ is a controlled set). Such a category can be defined, since the closeness relation is compatible with composition from left and right.  

\begin{remark}\label{another_category}
\normalfont
The category $\mathbf{TB}$ is related to continously controlled structures. Indeed, as seen from Corollary \ref{compact_metric_smirnov}, the category $\mathbf{TB}$ is equivalent to the following category $\mathbf{CC}$: the objects of $\mathbf{CC}$ are the locally compact spaces with the continuously controlled structures induced by metrizable compactifications, and the morphisms between them are the coarse maps modulo closeness. 

On the other hand, Cuchillo-Ib\'{a}\~{n}ez, Dydak, Koyama and Mor\'{o}n~\cite{CDKM} considered the category $\mathcal{Z}$ of $Z$-sets in the Hilbert cube $Q$ and continuous maps, and they have shown that $\mathcal{Z}$ is isomorphic to the category $\mathcal{C}_0(\mathcal{Z})$ of the complements of $Z$-sets in $Q$ with the $C_0$ coarse structures and coarse maps modulo closeness (here $Q$ is assumed to have a fixed metric). Since every compact metrizable space is homeomorphic to some $Z$-set in $Q$, the category $\mathbf{K}$ is equivalent to $\mathcal{Z}$. It follows that the categories $\mathbf{K}, \mathcal{Z}$ and $\mathcal{C}_0(\mathcal{Z})$  are equivalent to each other. The next Theorem \ref{category_equivalence} implies that they are equivalent to $\mathbf{TB}$, and hence to $\mathbf{CC}$.
\end{remark}

Let us consider the Higson corona functor $\nu$ introduced before Proposition \ref{nuf}. This functor sends close coarse maps to the same continuous map (see \cite[Proposition 2.41]{Roe}), and thus coarsely equivalent proper coarse spaces have homeomorphic Higson coronas. Naturally, we can ask the converse, namely whether $X$ and $Y$ are coarsely equivalent if $\nu X$ and $\nu Y$ are homeomorphic.  This question has a negative answer in general (see~\cite[Example 2.44, Proposition 2.45 (c)]{Roe}), but the next theorem states that we have an affirmative answer for objects of $\mathbf{TB}$. 

If $X$ is an object of $\mathbf{TB}$, then the completion $\tilde{X}$ of $X$ is compact  since $X$ is totally bounded. By Corollary \ref{compact_metric_smirnov}, $\tilde{X}$ is the Higson compactification of $X$ and $\tilde{X}\setminus X$ is the Higson corona. In particular, $\nu X$ is compact and metrizable. Thus, we can define a functor $\nu\colon \mathbf{TB}\to \mathbf{K}$.

\begin{theorem}\label{category_equivalence}
The functor $\nu\colon \mathbf{TB}\to \mathbf{K}$ is an equivalence of categories.
\end{theorem}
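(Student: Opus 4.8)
The plan is to show that the functor $\nu\colon\mathbf{TB}\to\mathbf{K}$ is essentially surjective, faithful and full; an equivalence of categories follows immediately. For an object $X$ of $\mathbf{TB}$ write $\tilde{X}$ for its (necessarily compact) completion; by Corollary \ref{compact_metric_smirnov}, $\tilde{X}$ is the Higson compactification $hX$, the corona $\nu X$ is the compact metrizable space $\partial X=\tilde{X}\setminus X$, and the $C_0$ structure on $X$ coincides with the continuously controlled structure induced by $\tilde{X}$. For essential surjectivity, given a nonempty compact metrizable $Z$ one forms the cone $CZ=Z\times[0,1]/(Z\times\{1\})$, fixes a metric on it, and sets $X=CZ\setminus(Z\times\{0\})$; then $X$ is open in $CZ$ (hence locally compact), totally bounded, and dense in $CZ$ since $Z\times\{0\}$ is closed with empty interior, so the completion of $X$ is $CZ$ and $\nu X=Z\times\{0\}\cong Z$ by Corollary \ref{compact_metric_smirnov}. (The empty space is handled separately; alternatively one may take $X=Q\setminus A$ for $A$ a copy of $Z$ realized as a $Z$-set in the Hilbert cube $Q$.)

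\emph{Faithfulness.} Suppose $f,g\colon X\to Y$ are coarse maps with $\nu f=\nu g=:\phi$. I must show $E:=\{(f(x),g(x))\mid x\in X\}$ is controlled on $Y$, i.e.\ continuously controlled by $\tilde{Y}$. Properness of $E$ follows from Lemma \ref{proper_in_C0}: for compact $K\subseteq Y$, $E[K]=f(g^{-1}(K))$ and $E^{-1}[K]=g(f^{-1}(K))$ have compact closure since $f$ and $g$ are proper and pre-bornologous. For condition (b) of the definition, let $\bigl((f(x_\lambda),g(x_\lambda))\bigr)$ be a net in $E$ with $f(x_\lambda)\to\omega\in\partial Y$; I claim $g(x_\lambda)\to\omega$. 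It suffices that every subnet of $(g(x_\lambda))$ has a further subnet converging to $\omega$ in the compact space $\tilde{Y}$. Given such a subnet, pass to a further subnet along which $x_\lambda$ converges to some $\xi\in\tilde{X}$. If $\xi\in X$, the tail of $(x_\lambda)$ lies in a compact neighbourhood of $\xi$, whose image under the pre-bornologous $f$ has compact closure in $Y$, contradicting $f(x_\lambda)\to\omega\in\partial Y$; hence $\xi\in\partial X=\nu X$. By Proposition \ref{nuf}, $f(x_\lambda)\to\nu f(\xi)=\phi(\xi)$, so $\omega=\phi(\xi)$, and likewise $g(x_\lambda)\to\nu g(\xi)=\phi(\xi)=\omega$ along this subnet. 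Thus $E$ is controlled and $f\sim g$.

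\emph{Fullness.} This is the heart of the matter. Given a continuous map $\phi\colon\nu X\to\nu Y$ I will construct a (possibly discontinuous) pre-bornologous map $f\colon X\to Y$ for which $f\cup\phi\colon\tilde{X}\to\tilde{Y}$ is continuous at every point of $\partial X$; Proposition \ref{coarse_into_conti_controlled} then makes $f$ a coarse map with $\nu f=\phi$. Assume $\partial X\neq\emptyset$ (otherwise $X$ is compact and any constant map works). Fix metrics $\rho$ on $\tilde{X}$ and $\sigma$ on $\tilde{Y}$, and recall $Y$ is open and dense in $\tilde{Y}$. For $x\in X$ set $\delta(x)=\rho(x,\partial X)>0$, choose $p(x)\in\partial X$ with $\rho(x,p(x))=\delta(x)$, and let $U_x=Y\cap B_{\tilde{Y}}(\phi(p(x)),\delta(x))$, a nonempty open subset of $Y$. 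Put $M(x)=\sup\{\sigma(y,\partial Y)\mid y\in U_x\}>0$ and choose $f(x)\in U_x$ with $\sigma(f(x),\partial Y)>\tfrac12 M(x)$. If $x_\lambda\to\omega\in\partial X$ in $\tilde{X}$, then $\delta(x_\lambda)\to0$, whence $p(x_\lambda)\to\omega$, $\phi(p(x_\lambda))\to\phi(\omega)$, and $\sigma(f(x_\lambda),\phi(p(x_\lambda)))<\delta(x_\lambda)\to0$, so $f(x_\lambda)\to\phi(\omega)$; this is the required continuity. For pre-bornologousness, let $K\subseteq X$ be compact and $c=\rho(K,\partial X)>0$. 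For $x\in K$ we have $U_x\supseteq Y\cap B_{\tilde{Y}}(\phi(p(x)),c)$, so $M(x)\geq m(\phi(p(x)))$, where $m(\omega)=\sup\{\sigma(y,\partial Y)\mid y\in Y\cap B_{\tilde{Y}}(\omega,c)\}$; the function $m$ is positive and lower semicontinuous on the compact set $\partial Y$, hence has a positive minimum $m_0$. Therefore $\sigma(f(x),\partial Y)>\tfrac12 m_0$ for all $x\in K$, so $\overline{f(K)}$ is a compact subset of $Y$ and $f$ is pre-bornologous.

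The main obstacle is the fullness step: the boundary map $\phi$ must be extended across $X$ by a map that ``approaches $\phi$ at infinity'' yet remains coarse, and the delicate point is that pulling $f(x)$ close to $\partial Y$ as $x$ approaches $\partial X$ could wreck properness on compact sets; choosing $f(x)$ so as to also keep a (relatively) large distance from $\partial Y$ within the admissible ball, combined with the lower-semicontinuity estimate, is what reconciles the two demands. Everything else — functoriality of $\nu$ (already noted before Proposition \ref{nuf}) and the deduction of the equivalence from essential surjectivity, fullness and faithfulness — is standard.
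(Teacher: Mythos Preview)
Your proof is correct and follows the same overall architecture as the paper (essential surjectivity, faithfulness, fullness, with the same reliance on Corollary~\ref{compact_metric_smirnov}, Proposition~\ref{nuf}, Proposition~\ref{coarse_into_conti_controlled} and Theorem~\ref{C0_is_topological}), and your faithfulness and essential-surjectivity arguments are essentially identical to the paper's, up to using the cone $CZ$ in place of the cylinder $K\times[0,1]$.

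The genuinely different step is fullness. The paper constructs $f$ \emph{discretely}: it stratifies $X$ by the annuli $U_n\setminus U_{n+1}$ (where $U_n$ is the $1/n$-neighbourhood of $\nu X$) and, for each $n$, fixes in advance a \emph{finite} $1/n$-net $\{y_{n,1},\dots,y_{n,k(n)}\}$ of $\nu Y$ in $Y$; then $f(x)$ is one of these finitely many points. Pre-bornologousness is then immediate, since $f(X\setminus U_n)$ is finite. Your construction is more geometric: you send $x$ near $\phi(p(x))$ while maximising (up to a factor $\tfrac12$) the distance to $\partial Y$, and you recover pre-bornologousness from a lower-semicontinuity estimate on the function $m(\omega)=\sup\{\sigma(y,\partial Y):y\in Y\cap B_{\tilde Y}(\omega,c)\}$ over the compact set $\partial Y$. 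The paper's device buys a one-line verification of pre-bornologousness at the cost of a slightly artificial choice of targets; your device keeps the construction pointwise and metric-natural, at the cost of the semicontinuity lemma. Both reductions feed into Proposition~\ref{coarse_into_conti_controlled} in the same way.
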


\begin{proof}
It is enough to show that $\nu$ is full and faithful, and that every object in $\mathbf{K}$ is isomorphic to $\nu X$ for some object $X$ in $\mathbf{TB}$. 

We shall first show that $\nu$ is full, namely that $\nu$ gives a surjective map from $\Hom_\mathbf{TB}(X,Y)$ to the set $\Hom_\mathbf{K}(\nu X,\nu Y)$ of continuous maps from $\nu X$ to $\nu Y$, for each $X$ and $Y$ in $\mathbf{TB}$. Let $h\colon \nu X\to \nu Y$ be a continuous map. Recall that the completion $\tilde{X}$ of $X$ gives the Higson compactification $hX=X\cup\nu X$ of $X$, and the same holds for $hY$. Thus we use the notation $\tilde{X}$ and $\tilde{Y}$ rather than $hX$ and $hY$, and their metrics extended from $X$ and $Y$ are denoted by $d$ when necessary.

We construct (a representative of) a morphism $f\colon X\to Y$ in $\mathbf{TB}$ such that $\nu f=h$. The basic idea here is as follows: for $x\in X$, we take a point $a\in \nu X$ close to $x$ and define $f(x)$ to be a point of $Y$ close to $h(a)$, to the same extent as $x$ is close to $a$. We explain this construction in detail. Let us define $U_n$ as the open $1/n$-neighborhood of $\nu X$ in $\tilde{X}$ for $n\in\mathbb{N}$, and let $U_0=\tilde{X}$. Using the compactness of $\nu Y$, for each $n\in\mathbb{N}$, take finitely many points $y_{n,1}$, $y_{n,2}$,\ldots, $y_{n, k(n)}$ in $Y$ such that $\nu Y\subset\bigcup_{i=1}^{k(n)} B(y_{n,i}, 1/i)$. For convenience, let $k(0)=1$ and let $y_{0,0}$ be an arbitrarily fixed point in $Y$.

%

To define $f\colon X\to Y$, let $x\in X$ and take the largest $n\geq 0$ such that $x\in U_n$. If $n=0$, then we define $f(x)=y_{0,0}$. If $n\geq 1$, choose $x'\in \nu X$ such that $d(x, x')=d(x, \nu X)$. Then we can choose $i\in\{1,2,\ldots, k(n)\}$ such that $h(x')\in B(y_{n,i}, 1/n)$. We finally define $f(x)=y_{n,i}\in Y$.  

We claim that $f\colon X\to Y$ is a coarse map and $\nu f=h$. First, notice that $f$ is pre-bornologous, since $C_0$ coarse structures satisfy the condition $(\star)$ in Remark \ref{star_condition} and $f(X\setminus U_n)$ is contained in the finite set $\{y_{m,i}\,|\,m<n, 1\leq i\leq k(m) \}$ for each $n\in\mathbb{N}$. By Theorem \ref{C0_is_topological}, the $C_0$ coarse structure on $Y$ is the continously controlled structure induced by $\tilde{Y}$. Also, we easily see that $f\cup h \colon X\cup \nu X=\tilde{X}\to\tilde{Y}$ is continuous at each point in $\nu X$. Then, it follows by Proposition \ref{coarse_into_conti_controlled} (and Proposition \ref{nuf}) that $f$ is coarse and $\nu f=h$. The fullness of $\nu$ is now proved.

Next we show that $\nu\colon \mathbf{TB}\to\mathbf{K}$ is faithful, namely that $\nu$ maps each $\Hom_\mathbf{TB}(X,Y)$ injectively to $\Hom_\mathbf{K}(\nu X, \nu Y)$. To see this, let $f, g\colon X\to Y$ be coarse maps such that $\nu f=\nu g$. We have to show that $f$ and $g$ are close, in other words, $E=\{(f(x),g(x))\,|\,x\in X\}\subset Y\times Y$ is controlled. By Theorem \ref{C0_is_topological}, it is enough to show that $E$ is continuously controlled by $\tilde{Y}$. To this end, take any $(\eta, \eta')\in\overline{E}\setminus Y\times Y$, where $\overline{E}$ denotes the closure of $E$ in $\tilde{Y}\times \tilde{Y}$. Then, there exists a net $(x_\lambda)$ in $X$ such that $(f(x_\lambda), g(x_\lambda))\to (\eta, \eta')$. Since $f$ is proper, we can take a subnet $(x_{\lambda_\mu})$ of $(x_\lambda)$ such that $x_{\lambda_\mu}\to \omega$ for some $\omega\in \nu X=\tilde{X}\setminus X$. Then by Proposition \ref{nuf}, we have $\eta=\lim f(x_{\lambda_\mu})=\nu f(\omega)=\nu g(\omega)=\lim g(x_{\lambda_\mu})=\eta'\in \nu Y=\tilde{Y}\setminus Y$, which shows that $E$ is continuously controlled by $\tilde{Y}$.

Finally, we have to show that every object in $\mathbf{K}$ is isomorphic to $\nu X$ for some object $X$ in $\mathbf{TB}$. To see this, let $K$ be any compact metrizable space, and fix any admissible metric $d$ on $K\times [0,1]$. Let $X=K\times (0,1]$. Then, $X=(X,d)$ is an object of $\mathbf{TB}$ and $K\times [0,1]$ is its Higson compactification by Corollary \ref{compact_metric_smirnov}. It follows that $\nu X=K\times\{0\}$ and hence $K$ is homeomorphic to $\nu X$. The proof is completed.\end{proof}

The following is an immediate consequence of Theorem \ref{category_equivalence} (and Corollary \ref{compact_metric_smirnov}):

\begin{corollary}\label{topology_and_C0}
Suppose that $M_1$ and $M_2$ are compact metric spaces and that $Z_1 \subset M_1$ and $Z_2 \subset M_2$ are closed nowhere dense subspaces. Then, $M_1\setminus Z_1$ and $M_2\setminus Z_2$ are coarsely equivalent as $C_0$ coarse spaces if and only if $Z_1$ and $Z_2$ are homeomorphic.  
\qed
\end{corollary}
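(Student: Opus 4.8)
The plan is to derive Corollary \ref{topology_and_C0} directly from Theorem \ref{category_equivalence} together with Corollary \ref{compact_metric_smirnov}, by identifying, for a compact metric space $M$ with closed nowhere dense subset $Z$, the open complement $M\setminus Z$ as an object of $\mathbf{TB}$ whose Higson corona is homeomorphic to $Z$. First I would observe that $X_i := M_i\setminus Z_i$, endowed with the metric restricted from $M_i$, is locally compact (being open in the compact, hence locally compact, space $M_i$) and totally bounded (being a subspace of the totally bounded space $M_i$), so $X_i$ is indeed an object of $\mathbf{TB}$ when equipped with its $C_0$ coarse structure. Next I would note that because $Z_i$ is nowhere dense in $M_i$, the subspace $X_i$ is \emph{dense} in $M_i$; since $M_i$ is a compact metric space and $X_i$ is a dense locally compact subspace, Corollary \ref{compact_metric_smirnov} applies and tells us that $M_i$ is precisely the Higson compactification of $X_i$ for the $C_0$ structure. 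Consequently the Higson corona is $\nu X_i = M_i\setminus X_i = Z_i$ (as a topological space), so in particular $\nu X_i$ is homeomorphic to $Z_i$.

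With this identification in hand, the corollary becomes a translation of Theorem \ref{category_equivalence} into the language of coronas. For the ``only if'' direction: if $X_1$ and $X_2$ are coarsely equivalent as $C_0$ coarse spaces, then — since $\nu$ is a functor sending close coarse maps to the same continuous map, so that a coarse equivalence induces an isomorphism in $\mathbf{K}$ — we get that $\nu X_1$ and $\nu X_2$ are homeomorphic, hence $Z_1$ and $Z_2$ are homeomorphic. (Alternatively one may simply invoke the standard fact, cited in the excerpt via \cite[Corollary 2.42]{Roe}, that coarsely equivalent proper coarse spaces have homeomorphic Higson coronas.) For the ``if'' direction: suppose $Z_1$ and $Z_2$ are homeomorphic, so $\nu X_1 \cong \nu X_2$ in $\mathbf{K}$; since $\nu\colon \mathbf{TB}\to\mathbf{K}$ is an equivalence of categories by Theorem \ref{category_equivalence}, it reflects isomorphisms, and therefore $X_1$ and $X_2$ are isomorphic in $\mathbf{TB}$ — which is exactly to say there are coarse maps $f\colon X_1\to X_2$ and $g\colon X_2\to X_1$ with $g\circ f$ close to $\id_{X_1}$ and $f\circ g$ close to $\id_{X_2}$, i.e.\ $X_1$ and $X_2$ are coarsely equivalent as $C_0$ coarse spaces.

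There is essentially no hard step remaining; the substance has all been front-loaded into Theorem \ref{category_equivalence} and Corollary \ref{compact_metric_smirnov}. The only point requiring a moment's care is the verification that nowhere-density of $Z_i$ is exactly the hypothesis needed to make $X_i$ dense in $M_i$, so that Corollary \ref{compact_metric_smirnov} is applicable and the corona is $Z_i$ on the nose rather than some proper subset or quotient; this is immediate from the definition of nowhere dense (the closure of $Z_i$, which equals $Z_i$ since it is closed, has empty interior, so its complement $X_i$ is dense). A secondary bookkeeping remark is that an isomorphism in $\mathbf{TB}$ is by definition represented by a pair of coarse maps inverse up to closeness, which is precisely the definition of coarse equivalence of the underlying $C_0$ coarse spaces, so the phrase ``isomorphic in $\mathbf{TB}$'' and ``coarsely equivalent as $C_0$ coarse spaces'' coincide. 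Hence the proof is a short concatenation of these observations, and I would write it in three or four sentences.
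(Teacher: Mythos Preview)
Your proposal is correct and matches the paper's approach exactly: the paper presents this corollary with no proof beyond noting it is ``an immediate consequence of Theorem \ref{category_equivalence} (and Corollary \ref{compact_metric_smirnov})'', and your write-up simply unpacks that remark, correctly identifying $M_i\setminus Z_i$ as an object of $\mathbf{TB}$ with Higson corona $Z_i$ and then invoking the equivalence of categories.
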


Moreover, Theorem \ref{category_equivalence} and the above corollary translate to the language of the category $\mathbf{CC}$ introduced in Remark \ref{another_category}, in view of Corollary \ref{compact_metric_smirnov}:

\begin{corollary}\label{topological_coarse}
The Higson corona functor $\nu\colon \mathbf{CC}\to \mathbf{K}$ is an equivalence of categories. In particular, two metrizable compactifications $\tilde{X}_1$ and $\tilde{X}_2$ of a locally compact space $X$ determine coarsely equivalent continuously controlled coarse structures if and only if their remainders are homeomorphic, $\tilde{X}_1\setminus X \approx \tilde{X}_2\setminus X$.\qed
\end{corollary}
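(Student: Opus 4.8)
The plan is to obtain Corollary \ref{topological_coarse} by transporting the equivalence $\nu\colon\mathbf{TB}\to\mathbf{K}$ of Theorem \ref{category_equivalence} along the equivalence $\mathbf{CC}\simeq\mathbf{TB}$ already recorded in Remark \ref{another_category}, and then reading off the statement about remainders from the elementary fact that an equivalence of categories both preserves and reflects isomorphisms.

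First I would make the comparison functor $\Phi\colon\mathbf{CC}\to\mathbf{TB}$ explicit. Given an object $X$ of $\mathbf{CC}$, that is, a locally compact space carrying the continuously controlled structure induced by a metrizable compactification $\tilde X$, fix any admissible metric on $\tilde X$ and restrict it to $X$; then $X$ is totally bounded and locally compact, and by Corollary \ref{compact_metric_smirnov} its $C_0$ coarse structure coincides with the given continuously controlled structure, with $\tilde X$ as Higson compactification. Put $\Phi(X)$ equal to this metric coarse space; on morphisms $\Phi$ is the identity, since the underlying coarse structures are unchanged and closeness is a purely coarse notion (in particular $\Phi$ does not depend, up to coincidence of objects, on the choice of admissible metric, since any two such choices give the same coarse structure on the same underlying set). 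The functor $\Phi$ is full and faithful by construction, and essentially surjective because for any totally bounded locally compact metric space $Y$ the completion $\tilde Y$ is a metrizable compactification and, again by Corollary \ref{compact_metric_smirnov}, the $C_0$ structure of $Y$ is the continuously controlled structure induced by $\tilde Y$; hence $\Phi$ is an equivalence. Note also that the continuously controlled structures occurring as objects of $\mathbf{CC}$ are proper: they are $C_0$ structures of totally bounded, hence separable, locally compact metric spaces, so Proposition \ref{C0_is_proper} applies and the Higson corona functor $\nu$ is genuinely defined on $\mathbf{CC}$. For such an object the Higson corona is $hX\setminus X=\tilde X\setminus X$, which is exactly $\nu(\Phi(X))$, so $\nu_{\mathbf{CC}}$ is naturally isomorphic to $\nu_{\mathbf{TB}}\circ\Phi$. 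Composing the two equivalences shows that $\nu\colon\mathbf{CC}\to\mathbf{K}$ is an equivalence of categories.

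For the "in particular" part I would argue as follows. Two metrizable compactifications $\tilde X_1,\tilde X_2$ of the locally compact space $X$ give two objects of $\mathbf{CC}$ on the same underlying set $X$; their continuously controlled coarse structures are coarsely equivalent exactly when these two objects are isomorphic in $\mathbf{CC}$, since an isomorphism in $\mathbf{CC}$ is precisely a coarse equivalence modulo closeness. Because $\nu\colon\mathbf{CC}\to\mathbf{K}$ is an equivalence, it reflects and preserves isomorphisms, so this happens if and only if $\nu\tilde X_1=\tilde X_1\setminus X$ and $\nu\tilde X_2=\tilde X_2\setminus X$ are isomorphic in $\mathbf{K}$, i.e. homeomorphic; these remainders are indeed objects of $\mathbf{K}$ since $X$ is open in each $\tilde X_i$ (local compactness), making each $\tilde X_i\setminus X$ compact, and they are metrizable as subspaces of metrizable spaces. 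Alternatively, this second assertion can be read directly off Corollary \ref{topology_and_C0} applied to $M_i=\tilde X_i$ and $Z_i=\tilde X_i\setminus X$ (each $Z_i$ is closed and, by density of $X$, nowhere dense), once one replaces the continuously controlled structures by the coinciding $C_0$ structures via Corollary \ref{compact_metric_smirnov}.

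There is no serious obstacle: the mathematical content is already contained in Theorem \ref{category_equivalence} and Corollary \ref{compact_metric_smirnov}. The only points requiring care are bookkeeping ones — verifying that the continuously controlled structures appearing in $\mathbf{CC}$ are proper so that $\nu$ is defined on them, checking that $\Phi$ is well defined independently of the chosen admissible metric, and making sure that "coarsely equivalent continuously controlled structures" is literally "isomorphic in $\mathbf{CC}$", so that the categorical equivalence can be applied without further comment.
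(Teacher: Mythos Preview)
Your proposal is correct and follows essentially the same approach as the paper: the corollary is stated with a \qed\ and no separate proof, the paper simply observing that Theorem \ref{category_equivalence} and Corollary \ref{topology_and_C0} translate to $\mathbf{CC}$ via Corollary \ref{compact_metric_smirnov} and the equivalence $\mathbf{CC}\simeq\mathbf{TB}$ recorded in Remark \ref{another_category}. You have merely spelled out these translations (the functor $\Phi$, the identification $\nu_{\mathbf{CC}}\cong\nu_{\mathbf{TB}}\circ\Phi$, and the ``reflects isomorphisms'' argument) in more detail than the paper does.
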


\begin{corollary}\label{contractible}
Every object in $\mathbf{CC}$ is coarsely equivalent to an object in $\mathbf{CC}$ that is contractible, whose Higson compactification is also contractible.
\end{corollary}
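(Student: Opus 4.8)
The plan is to attach a cone to the prescribed corona: the cone on a compact metrizable space is contractible, and removing the base of the cone leaves a locally compact space whose continuously controlled coarse structure, induced by the cone, has the original space as its remainder.

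In detail, let $X$ be an object of $\mathbf{CC}$, so that $X$ is locally compact with the continuously controlled structure induced by a metrizable compactification $\tilde X$, and put $Z=\tilde X\setminus X$. Since $X$ is locally compact it is open and dense in $\tilde X$, so $Z$ is a (nowhere dense) compact metrizable space, and $\nu X\approx Z$ by Corollary~\ref{compact_metric_smirnov}. Form the cone $CZ=(Z\times[0,1])/(Z\times\{1\})$, which is again compact metrizable, fix a compatible metric on it, and set $Y=CZ\setminus(Z\times\{0\})$. I would first check that $Y$ is an object of $\mathbf{CC}$ with $\nu Y\approx Z$: the set $Z\times\{0\}$ is closed and nowhere dense in $CZ$, so $Y$ is open, dense and locally compact in the compact metrizable space $CZ$; hence $CZ$ is a metrizable compactification of $Y$, and by Corollary~\ref{compact_metric_smirnov} the continuously controlled structure on $Y$ induced by $CZ$ equals the $C_0$ structure, $CZ$ is the Higson compactification of $Y$, and $\nu Y=Z\times\{0\}\approx Z$. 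Since $\nu\colon\mathbf{CC}\to\mathbf{K}$ is an equivalence of categories (Corollary~\ref{topological_coarse}) and $\nu X\approx\nu Y$, the objects $X$ and $Y$ are isomorphic in $\mathbf{CC}$, that is, coarsely equivalent.

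It then remains to observe that $Y$ and its Higson compactification $CZ$ are both contractible. The space $CZ$ is a cone, hence contractible. To contract $Y$, I would push every point toward the apex: the map $(Z\times[0,1])\times[0,1]\to Z\times[0,1]$ sending $((z,t),s)$ to $(z,\,t+s(1-t))$ is continuous, respects the identification collapsing $Z\times\{1\}$, and carries $Z\times(0,1]$ into itself since $t+s(1-t)>0$ when $t>0$; since the product of the quotient map $Z\times(0,1]\to Y$ with $\id_{[0,1]}$ is again a quotient map, it descends to a homotopy $Y\times[0,1]\to Y$ from $\id_Y$ to the constant map at the apex. Thus $Y$ is contractible, $CZ$ is contractible, and $Y$ is coarsely equivalent to $X$, which proves the corollary.

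This is a short consequence of the machinery already set up, so I do not anticipate a serious obstacle; the one place needing (routine) care is confirming that $Z\times\{0\}$ is nowhere dense in $CZ$ and that $Y$ is locally compact, so that Corollary~\ref{compact_metric_smirnov} genuinely applies and $\nu Y$ is identified as $Z$. One could instead embed $Z$ as a $Z$-set in the Hilbert cube $Q$ and take $Y=Q\setminus Z$, with contractibility of $Y$ following from the homotopy negligibility of $Z$-sets; the cone construction is preferred here because it avoids appealing to infinite-dimensional topology.
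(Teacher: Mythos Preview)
Your proposal is correct and follows essentially the same route as the paper: replace $X$ by the open cone over its corona $Z$, observe that this open cone and its closure are contractible, and invoke the equivalence $\nu\colon\mathbf{CC}\to\mathbf{K}$ (Corollary~\ref{topological_coarse}) to conclude coarse equivalence. The paper is terser---it simply asserts that the open and closed cones are contractible and that the closed cone is the Higson compactification by Corollary~\ref{compact_metric_smirnov}---whereas you supply the explicit contracting homotopy and verify the hypotheses, but the argument is the same.
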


\begin{proof}
For any object $X$ in $\mathbf{CC}$, which has the continuously controlled structure induced by a metrizable compactification $\tilde{X}$, consider the remainder $Z=\tilde{X}\setminus X$. Let $\tilde{Y}$ be the cone over $Z$, which is compact metrizable and is a compactification of the open cone $Y=\tilde{Y}\setminus Z$. We can then equip $Y$ with the continuously controlled structure induced by $\tilde{Y}$. By Corollary \ref{topological_coarse}, the coarse space $Y$ is an object of $\mathbf{CC}$ coarsely equivalent to $X$. Clearly $Y$ and $\tilde{Y}$ are contractible, and $\tilde{Y}$ is the Higson compactification of $Y$ by Corollary \ref{compact_metric_smirnov}.
\end{proof}

\begin{example}
\normalfont
Applying Corollary \ref{topological_coarse}, we can construct three proper coarse structures $\mathcal{E}_i\,(i=1,2,3)$ on the same topological space $X$ with $\mathcal{E}_1\subset \mathcal{E}_2\subset \mathcal{E}_3$ for which $\mathcal{E}_1$ and $ \mathcal{E}_3$ are coarsely equivalent, but $\mathcal{E}_2$ fails to be equivalent to $\mathcal{E}_1$ (or $\mathcal{E}_3$). Indeed, it suffices to take three metrizable compactifications $\gamma_i X$ of the same locally compact space $X$ that admit maps $\gamma_1 X\to\gamma_2 X\to\gamma_3 X$ extending the identity, with the remainders $Z_i=\gamma_i X\setminus X$ satisfying $Z_1\approx Z_3$ but $Z_1\not\approx Z_2$. Then, the continuously controlled strutcures induced by $\gamma_i X\,(i=1,2,3)$ give an example. It is easy to construct an explicit example where $X=[0,1]\times[0,1)$, $Z_2$ is a circle and $Z_1, Z_3$ are arcs.
\end{example}

We conclude this paper with results concerning embeddings of $C_0$ coarse spaces, stating that there is a ``universal'' $C_0$ coarse space in which all object in $\mathbf{TB}$ can be embedded. We say that  a map $f\colon X\to Y$ between coarse spaces is a \textit{coarse embedding} if the map $f\colon X\to f(X)$ is a coarse equivalence. Here $f(X)$ is assumed to have the induced coarse structure $\{F\subset f(X)\times f(X)\,|\,F\text{ is controlled in }Y\}$. The proof of the next lemma is straightforward:

\begin{lemma}\label{induced}
Let $X$ be a locally compact metric space with the $C_0$ coarse structure and $Y\subset X$ be a closed set. Then, the induced coarse structure on $Y$ coincides with the $C_0$ structure for the locally compact metric space $Y$.
\qed
\end{lemma}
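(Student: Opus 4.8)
The statement to prove is Lemma~\ref{induced}: for a locally compact metric space $X$ with the $C_0$ coarse structure and a closed subset $Y\subset X$, the induced coarse structure on $Y$ coincides with the $C_0$ coarse structure of the metric space $(Y,d|_Y)$.

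The plan is to verify the two inclusions of coarse structures directly from the definitions. One direction is essentially immediate: if $E\subset Y\times Y$ is $C_0$-controlled in $Y$, then for every $\varepsilon>0$ there is a compact $K\subset Y$ with $d(x,y)<\varepsilon$ for $(x,y)\in E\setminus K\times K$; since $K$ is also compact in $X$ (being a closed bounded subset, or simply a compact set, of the metric space $X$), the same $K$ witnesses that $E$ is $C_0$-controlled in $X$, so $E$ lies in the induced structure. For the reverse direction, suppose $E\subset Y\times Y$ is controlled in the induced structure, meaning $E$ is $C_0$-controlled as a subset of $X\times X$. Given $\varepsilon>0$, pick a compact $K\subset X$ with $d(x,y)<\varepsilon$ for all $(x,y)\in E\setminus K\times K$. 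The only subtlety is that $K$ need not be contained in $Y$. But $K'=K\cap Y$ is a closed subset of the compact set $K$, hence compact, and it is contained in $Y$; I claim $d(x,y)<\varepsilon$ for all $(x,y)\in E\setminus K'\times K'$. Indeed if $(x,y)\in E\setminus K'\times K'$ then $x,y\in Y$, so $(x,y)\notin K'\times K'$ forces $(x,y)\notin K\times K$ (since $x\in K\cap Y = K'$ and $y\in K\cap Y=K'$ would give $(x,y)\in K'\times K'$); thus $(x,y)\in E\setminus K\times K$ and $d(x,y)<\varepsilon$. Hence $E$ is $C_0$-controlled in $Y$.

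I would also remark, for completeness, that $Y$ with the restricted metric is itself a locally compact metric space (a closed subspace of a locally compact Hausdorff space is locally compact), so the $C_0$ coarse structure on $Y$ is well defined and Proposition~\ref{C0_is_proper} applies to it. No serious obstacle is expected here; the argument is a short set-theoretic manipulation of the compact sets appearing in the definition of $C_0$-controlledness, the key observation being that intersecting the witnessing compact set with the closed subset $Y$ preserves both compactness and the defining inequality on $E\subset Y\times Y$.
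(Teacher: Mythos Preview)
Your proof is correct. The paper omits the proof entirely (declaring it ``straightforward'' and placing a \qed\ after the statement), so your argument---verifying both inclusions directly from the definition, with the key observation that $K'=K\cap Y$ is compact because $Y$ is closed---is exactly the routine check the authors had in mind.
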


First, we consider coarse embeddings that are topological embeddings at the same time:

\begin{proposition}\label{topological_universal}
There exists a separable locally compact metric space $X$ such that for every object $Y$ in $\mathbf{TB}$ admits a map $f\colon Y\to X$ that is simultaneously a topological and coarse embedding.
\end{proposition}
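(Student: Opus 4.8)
The plan is to produce a single space $X$ that is "large enough" to contain every object of $\mathbf{TB}$ as a closed subspace. By Theorem \ref{category_equivalence}, every object of $\mathbf{TB}$ arises (up to coarse equivalence) from a compact metrizable remainder, and more concretely, from the construction in the last part of that proof: given a compact metrizable $K$, the space $K\times(0,1]\subset K\times[0,1]$ is an object of $\mathbf{TB}$ with Higson compactification $K\times[0,1]$. The first step, then, is to observe that it is enough to find a separable locally compact metric $X$ that contains a topological copy of $K\times(0,1]$ as a \emph{closed} subset for every compact metrizable $K$, because by Lemma \ref{induced} the induced coarse structure on that closed copy is exactly its own $C_0$ structure, which (for a suitable metric) makes the inclusion a coarse embedding, and every object of $\mathbf{TB}$ coarsely embeds — indeed is coarsely equivalent via the identity up to reparametrization — into such a product space. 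Actually, to handle \emph{all} objects $Y$ of $\mathbf{TB}$ simultaneously with a single map that is both topological and coarse, the cleanest route is: $Y$ itself has a compact completion $\tilde Y$; pick a homeomorphism of $\tilde Y$ onto a $Z$-set in the Hilbert cube $Q$ with $Y$ going to the complement-type part, or more simply embed $\tilde Y$ in $Q$ and use that $Y=\tilde Y\setminus(\tilde Y\setminus Y)$ is locally compact.

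The second step is to build $X$. Take the Hilbert cube $Q$ with a fixed metric, and set $X = Q\times(0,1]$ with an admissible metric $d$ on $Q\times[0,1]$ restricted to $X$; this $X$ is separable, locally compact, and metric. Given any object $Y$ in $\mathbf{TB}$ with compact completion $\tilde Y$, fix a topological embedding $j\colon\tilde Y\hookrightarrow Q$. Then $j$ carries $Y$ (open in $\tilde Y$) onto a locally compact, hence locally closed, subset of $Q$ — but not necessarily closed, so I cannot directly land $Y$ in a closed subset of $Q$. The remedy is the extra coordinate: define $f\colon Y\to X$ by $f(y) = (j(y),\, t(y))$ where $t(y)$ is chosen to "push $y$ toward the cube's interior region $t>0$ in proportion to how far $y$ is from the boundary $\tilde Y\setminus Y$." Concretely, using the metric $\rho$ on $Q$ and the compact set $\partial = j(\tilde Y\setminus Y)\subset Q$, set $t(y) = \min\{1,\ \rho(j(y),\partial)\}$, so $t(y)>0$ exactly on $Y$ and $t(y)\to0$ as $y$ approaches the boundary. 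Then the graph $f(Y)=\{(j(y),t(y))\}$ is closed in $X=Q\times(0,1]$: if $(j(y_\lambda),t(y_\lambda))\to(q,s)\in Q\times(0,1]$, then $\rho(q,\partial)=\lim\rho(j(y_\lambda),\partial)\ge s>0$ (after the $\min$ with $1$), so $q$ lies off $\partial$, hence $q=j(y)$ for a genuine $y\in Y$, and $s=t(y)$. This makes $f$ a closed topological embedding.

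The third step is to verify $f$ is a coarse embedding, i.e.\ that $f\colon Y\to f(Y)$ is a coarse equivalence where $f(Y)$ carries the induced structure. By Lemma \ref{induced}, since $f(Y)$ is closed in $X$, the induced structure on $f(Y)$ equals the $C_0$ structure of $f(Y)$ as a locally compact metric space. So it suffices to check that $f\colon Y\to f(Y)$ is a \emph{coarse equivalence between $C_0$ coarse spaces}, and by Corollary \ref{coarse_char} (and its inverse version) it is enough to see that $f$ is a proper homeomorphism onto $f(Y)$ that, together with its inverse, extends continuously to the completions with the extension being the identity-type map on remainders. But the completion of $f(Y)$ inside $X$ is exactly $\cl_X f(Y)\cup(\text{part at }t=0)$; more usefully, $\overline{f(Y)}$ in $Q\times[0,1]$ is homeomorphic to $\tilde Y$ via $(q,s)\mapsto j^{-1}(q)$ extended suitably, and this homeomorphism restricts to $f^{-1}$ on $f(Y)$ and to a homeomorphism of remainders. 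Then Theorem \ref{C0_is_topological} (the $C_0$ structure equals the continuously controlled one induced by the completion) plus Proposition \ref{coarse_into_conti_controlled} applied to both $f$ and $f^{-1}$ gives that $f$ and $f^{-1}$ are coarse, hence $f$ is a coarse embedding.

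\textbf{Main obstacle.} The delicate point is the simultaneity: making \emph{one} space $X$ work for \emph{all} $Y\in\mathbf{TB}$ at once forces the use of a universal compact space (the Hilbert cube) for the "boundary direction," and then the real work is arranging the embedding so that the image is genuinely \emph{closed} in $X$ while the metric on $X$ is compatible enough that the $C_0$ structure inherited on the image matches the $C_0$ structure coming from $Y$'s own completion. The graph trick with the auxiliary coordinate $t(y)=\min\{1,\rho(j(y),\partial)\}$ is what reconciles these; I expect the bulk of the writeup to be the routine-but-careful verification that this $f$ is a closed embedding and that $f,f^{-1}$ both satisfy the hypotheses of Corollary \ref{coarse_char} / Proposition \ref{coarse_into_conti_controlled}, for which the key metric estimate is that $\rho(j(y),\partial)$ is a $1$-Lipschitz function of $j(y)$, so $t$ distorts distances in a controlled way near infinity.
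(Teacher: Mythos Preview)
Your construction is essentially the same as the paper's: take $X=Q\times(0,1]$ (the paper uses $Q\times[0,1)$) with a metric restricted from $Q\times[0,1]$, embed $\tilde Y$ into $Q$ via some $j$, and map $Y$ into $X$ by $y\mapsto(j(y),t(y))$ where $t$ is a continuous function on $\tilde Y$ vanishing exactly on $\tilde Y\setminus Y$. Your specific choice $t(y)=\min\{1,\rho(j(y),\partial)\}$ is a particular instance of the paper's $\varphi$ (up to reversing the interval).

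Where you diverge is in the verification, which you make considerably harder than necessary. You argue that $f(Y)$ is closed in $X$, invoke Lemma~\ref{induced} to identify the induced coarse structure on $f(Y)$ with its own $C_0$ structure, and then appeal to Theorem~\ref{C0_is_topological} and Proposition~\ref{coarse_into_conti_controlled} to check that $f$ and $f^{-1}$ are coarse. This works, but the paper's route is much shorter: since your $t$ extends continuously to all of $\tilde Y$, the map $i=(j,t)\colon\tilde Y\to Q\times[0,1]$ is a continuous injection of a compact space, hence a homeomorphism onto its image, and $i^{-1}(X)=Y$. Then $f=i|_Y$ and $f^{-1}=(i^{-1})|_{f(Y)}$ are restrictions of continuous maps defined on compact metric spaces, so they are automatically uniformly continuous; they are proper because they are homeomorphisms. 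Corollary~\ref{coarse_char} then gives immediately that both $f\colon Y\to f(Y)$ and its inverse are coarse, hence $f$ is a coarse embedding. No need for closedness of $f(Y)$, Lemma~\ref{induced}, or Proposition~\ref{coarse_into_conti_controlled}.
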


\begin{proof}
We can take $X=Q\times[0,1)$, where $Q=[0,1]^\mathbb{N}$ is the Hilbert cube. We define a metric on $X$ as the restriction of any compatible metric on $Q\times[0,1]$. Let $Y$ be any object of $\mathbf{TB}$ and $\tilde{Y}$ be its completion. We fix a continuous function $\varphi\colon \tilde{Y}\to [0,1]$ such that $\varphi^{-1}(1)=\tilde{Y}\setminus Y$ and a topological embedding $j\colon \tilde{Y}\to Q$. Then, the map $i\colon \tilde{Y}\to Q\times[0,1]$ defined by $i(y)=(j(y),\varphi(y))$ gives a topological embedding such that $i^{-1}(X)=i^{-1}(Q\times[0,1))=Y$. Let us show that $f=i|_Y\colon Y\to X$ is the required map. The maps $f\colon Y\to f(Y)$ and $f^{-1}\colon f(Y)\to Y$ are proper since they are homeomophisms, and are uniformly continuous since they are restrictions of continuous maps, namely $i$ and $i^{-1}$, defined on compact metric spaces. We conclude from Corollary \ref{coarse_char} that $f\colon Y\to X$ is a coarse embedding.
\end{proof}

If we admit coarse embeddings that are not topological embeddings (and not even continuous maps), we have the following result by using Theorem \ref{category_equivalence}:

\begin{theorem}\label{universal}
For every noncompact locally compact separable metrizable space $X$, there exists a compatible totally bounded metric $d$ on $X$ such that every object in $\mathbf{TB}$ can be coarsely embedded into $(X,d)$ with respect to the $C_0$ structure.
\end{theorem}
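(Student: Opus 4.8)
The plan is to reduce this to Theorem~\ref{category_equivalence} (the equivalence $\nu\colon\mathbf{TB}\to\mathbf{K}$) by producing a single object $(X,d)$ of $\mathbf{TB}$ whose Higson corona $\nu X$ is ``topologically universal'' among compact metrizable spaces, i.e.\ contains a homeomorphic copy of every compact metrizable space as a subspace. Given such an $X$, for an arbitrary object $Y$ of $\mathbf{TB}$ we have a topological embedding $e\colon \nu Y\hookrightarrow \nu X$; since $\nu Y$ is closed in $\nu X$, $e$ is a homeomorphism onto a closed (hence compact) subset, and in particular a continuous map in $\Hom_{\mathbf{K}}(\nu Y,\nu X)$. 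By fullness of $\nu$ (proved inside Theorem~\ref{category_equivalence}) there is a coarse map $f\colon Y\to X$ with $\nu f=e$. The remaining task is to check that this $f$ is actually a coarse \emph{embedding}, i.e.\ that $f\colon Y\to f(Y)$ is a coarse equivalence where $f(Y)$ carries the induced coarse structure.

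For the embedding claim the key point is that $\nu$ is \emph{fully faithful}, so it reflects isomorphisms. Concretely: let $W=\cl_{\tilde X} f(Y)$, a closed subset of the compact metric space $\tilde X$, so $W$ is itself a compact metric space with dense locally compact subset $W\setminus(\text{its remainder})$; by Lemma~\ref{induced} the induced $C_0$ structure on $f(Y)$ (as a subset of $X$) agrees with the intrinsic $C_0$ structure coming from the metric, provided $f(Y)$ is closed in $X$ --- this is where one must be careful. Rather than fight with set-theoretic issues about $f(Y)$, the cleaner route is: the corona of the coarse subspace $f(Y)\subset X$ is exactly the remainder $W\setminus f(Y)\subset\nu X$, and since $\nu f=e$ is a homeomorphism onto $e(\nu Y)$, this remainder is homeomorphic to $\nu Y$. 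Then $f\colon Y\to f(Y)$ is a morphism in $\mathbf{TB}$ (or in the equivalent category of $C_0$ spaces with closed-subspace structures) inducing a homeomorphism on coronas, hence by faithfulness and fullness it is a coarse equivalence. So the structure is: (1)~build $X$ with universal corona; (2)~invoke fullness to get $f$; (3)~invoke full faithfulness to upgrade $f$ to a coarse embedding.

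For step (1), the natural choice mirrors the proof of the last part of Theorem~\ref{category_equivalence}: let $Q=[0,1]^{\mathbb N}$ be the Hilbert cube, which contains every compact metrizable space as a closed subspace, put $\tilde X = Q$ and $X=X\setminus(\text{a point, or a }Z\text{-set})$. But we need $X$ to be a \emph{prescribed} noncompact locally compact separable metrizable space, not $Q$ minus a point. The fix: given such an $X$, choose any metrizable compactification $\tilde X$ of $X$ whose remainder $Z=\tilde X\setminus X$ is homeomorphic to $Q$. This is possible precisely because $X$ is noncompact, locally compact, separable metrizable: one can find a metrizable compactification with remainder the Hilbert cube --- e.g.\ take any metrizable compactification $\gamma X$ with nonempty remainder, and then modify it (using standard compactification theory, or by embedding $X$ as an open subset of $Q\times[0,1)$ and taking closure appropriately) so the remainder becomes $Q$; alternatively cite the fact that every noncompact locally compact separable metric space admits a metrizable compactification with remainder homeomorphic to the Hilbert cube. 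Then fix a compatible metric $d$ on $\tilde X$ and restrict it to $X$; by Corollary~\ref{compact_metric_smirnov}, $(X,d)$ is an object of $\mathbf{TB}$ with $\nu X\approx Q$, which is topologically universal.

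The main obstacle, I expect, is step (1): arranging that the \emph{given} space $X$ --- with no freedom in its topology, only in the metric --- admits a totally bounded compatible metric whose completion has remainder containing (equivalently, homeomorphic to) the Hilbert cube. This is a statement purely about metrizable compactifications; the needed input is that any noncompact, locally compact, separable metrizable space has a metrizable compactification with Hilbert-cube remainder. One should either cite a reference for this (it follows from general theory: $X$ embeds as an open subset of some compact metrizable $\tilde X$, and one can enlarge the remainder to be $Q$ since $Q$ maps onto any compact metrizable space and absorbs them) or prove it directly by embedding $\tilde X_0 \hookrightarrow Q\times[0,1]$ with $X$ landing in $Q\times[0,1)$ and the remainder arranged to be $Q\times\{1\}\approx Q$, mimicking Proposition~\ref{topological_universal}. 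A secondary, more routine obstacle is verifying in step (3) that the induced coarse structure on $f(Y)$ really is a $C_0$ (equivalently continuously controlled) structure to which the fully-faithfulness argument applies; this is handled by Lemma~\ref{induced} together with the observation that $\cl_{\tilde X}f(Y)$ is a metrizable compactification of $f(Y)$ and Corollary~\ref{compact_metric_smirnov} identifies its structure.
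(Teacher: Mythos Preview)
Your overall strategy matches the paper's exactly: construct a metrizable compactification of $X$ with Hilbert-cube remainder (so $\nu X\approx Q$ is topologically universal), then lift a topological embedding $\nu Y\hookrightarrow\nu X$ to a coarse embedding $Y\to X$ using the full faithfulness of $\nu$. The paper packages the lifting step as a separate Lemma~\ref{embedding}, and gives an explicit construction for step~(1): pick a sequence $(x_n)$ in $X$ with no convergent subsequence, map it to a dense sequence in $Q$, extend by Tietze to $h\colon X\to Q$, and embed $X$ in $(X\cup\{\infty\})\times Q$ via $x\mapsto(x,h(x))$; the closure has remainder $\{\infty\}\times Q\approx Q$.

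There is, however, a genuine gap in your step~(3). You correctly flag that $f(Y)$ need not be closed in $X$, but your ``cleaner route'' does not actually resolve this: if $f(Y)$ is not closed in $X$ then it need not be locally compact, hence is not an object of $\mathbf{TB}$, and the sentence ``$f\colon Y\to f(Y)$ is a morphism in $\mathbf{TB}$ \ldots\ inducing a homeomorphism on coronas, hence \ldots\ is a coarse equivalence'' is not justified. Likewise, $\cl_{\tilde X}f(Y)$ is a compactification of $f(Y)$ only if $f(Y)$ is open in that closure, i.e.\ locally compact; and Lemma~\ref{induced} is stated only for closed subsets. The paper's fix (inside the proof of Lemma~\ref{embedding}) is to pass first to the closure $\overline{f(Y)}$ \emph{in $X$}, which is closed and hence an object of $\mathbf{TB}$ by Lemma~\ref{induced}; one checks $\nu\overline{f(Y)}=e(\nu Y)$, so by full faithfulness $Y\to\overline{f(Y)}$ is a coarse equivalence. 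The remaining step---that the inclusion $f(Y)\hookrightarrow\overline{f(Y)}$ is a coarse equivalence---is Lemma~\ref{dense}, which uses the existence of a controlled neighborhood of the diagonal (properness of the $C_0$ structure on separable spaces, Proposition~\ref{C0_is_proper}) to build a coarse inverse. This is the missing ingredient in your argument.
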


Corollary \ref{contractible} turns every object in $\mathbf{TB}$ into a contractible space, which is ``continuous'' in nature. The next corollary of Theorem \ref{universal} is a result in the opposite direction, saying that every object in $\mathbf{TB}$ can be expressed as a discrete metric space. Here, a \textit{discrete} metric space means a metric space whose topology is discrete.

\begin{corollary}\label{discrete}
There exists a countable discrete metric space $X$ such that every object in $\mathbf{TB}$ can be coarsely embedded into $X$ with respect to the $C_0$ structures. Moreover, every object in $\mathbf{TB}$ is coarsely equivalent to some countable discrete metric space with the $C_0$ structure.
\end{corollary}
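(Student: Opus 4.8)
The plan is to obtain both assertions from results already established: the first from the universal embedding Theorem \ref{universal}, and the second from the categorical equivalence of Theorem \ref{category_equivalence}, applied through Corollary \ref{topology_and_C0}.

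For the first assertion, apply Theorem \ref{universal} to the space $X=\mathbb{N}$ carrying the discrete topology. This $X$ is countable, hence separable; it is metrizable; it is noncompact; and it is locally compact, since every point is isolated. Theorem \ref{universal} then provides a compatible totally bounded metric $d$ on $\mathbb{N}$ such that every object of $\mathbf{TB}$ coarsely embeds into $(\mathbb{N},d)$ for the $C_0$ structure. Since $d$ induces the discrete topology, $(\mathbb{N},d)$ is a countable discrete metric space, which is exactly the space required.

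For the second assertion, fix an object $Y$ of $\mathbf{TB}$ and set $K=\nu Y$, a compact metrizable space; recall that $\tilde Y$ is the Higson compactification of $Y$ and that $\nu Y=\tilde Y\setminus Y$ is closed and nowhere dense in $\tilde Y$ (because $Y$ is dense and open in $\tilde Y$). I would manufacture a countable discrete metric space $D$ whose completion has a closed nowhere dense remainder homeomorphic to $K$, so that Corollary \ref{topology_and_C0} applies directly. To this end fix a metric $d_K$ on $K$, choose for each $n\in\mathbb{N}$ a finite $2^{-n}$-net $F_n\subset K$, metrize $K\times[0,1]$ by $d\bigl((x,s),(x',t)\bigr)=d_K(x,x')+|s-t|$, and put
\[
D=\bigcup_{n\in\mathbb{N}}F_n\times\{1/n\}\ \subset\ K\times(0,1].
\]
Each point $(x,1/n)\in D$ is isolated, since for $(x',1/m)\in D$ distinct from $(x,1/n)$ one has $d\bigl((x,1/n),(x',1/m)\bigr)\geq|1/n-1/m|>0$ if $m\neq n$ and $\geq d_K(x,x')>0$ if $m=n$; hence $D$ is a countable discrete metric space. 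It is totally bounded as a subset of the compact space $K\times[0,1]$, and its completion is $\tilde D=\cl_{K\times[0,1]}D=D\cup(K\times\{0\})$, the denseness of $D$ coming from the fact that each $F_n$ is a $2^{-n}$-net (so every $(x,0)$ is a limit of points $(x_n,1/n)$ with $d_K(x,x_n)\leq 2^{-n}$). Since $K\times\{0\}$ is closed in $\tilde D$, the set $D$ is open, hence locally compact, and $D$ is an object of $\mathbf{TB}$ with Higson compactification $\tilde D$ by Corollary \ref{compact_metric_smirnov}; in particular the remainder $K\times\{0\}$ is closed and nowhere dense in $\tilde D$ and homeomorphic to $K\approx\nu Y$. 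Applying Corollary \ref{topology_and_C0} to $D=\tilde D\setminus(K\times\{0\})$ and $Y=\tilde Y\setminus\nu Y$ — both complements of closed nowhere dense subsets of compact metric spaces, with homeomorphic remainders — shows that $D$ and $Y$ are coarsely equivalent as $C_0$ coarse spaces, which is the second assertion.

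The verifications that $D$ is discrete, totally bounded, locally compact, and that $\tilde D$ is its completion are routine; the only point needing care is the choice of metric on $K\times[0,1]$, where taking the sum metric (rather than an arbitrary admissible one) is what guarantees that distinct levels $F_n\times\{1/n\}$ are uniformly separated, making $D$ discrete and pinning down its completion. With that in hand, the first statement is immediate from Theorem \ref{universal}, and the second reduces to exhibiting the single explicit discrete model $D$ in each coarse-equivalence class; I do not expect a serious obstacle.
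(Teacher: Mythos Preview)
Your first assertion matches the paper's proof exactly: both apply Theorem \ref{universal} to a countable discrete space such as $\mathbb{N}$.

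For the second assertion your route is correct but genuinely different from the paper's. The paper derives the second part directly from the first via Lemma \ref{induced}: given $Y\in\mathbf{TB}$, the first part supplies a coarse embedding $f\colon Y\to X$ into the universal countable discrete space; since $X$ is discrete, the image $f(Y)$ is automatically closed in $X$, so Lemma \ref{induced} identifies the induced coarse structure on $f(Y)$ with its own $C_0$ structure, and $f(Y)$ is the desired countable discrete model. Your approach instead builds an explicit discrete space $D\subset K\times(0,1]$ in each coarse-equivalence class and invokes Corollary \ref{topology_and_C0}. The paper's argument is shorter and makes the second statement a formal consequence of the first, while yours is self-contained and produces a concrete discrete representative without passing through the universal space.

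One minor imprecision: your justification that $D$ is discrete only shows that distinct points lie at positive distance, which is true in any metric space. To conclude that $(x,1/n)$ is isolated you need that $F_n$ is \emph{finite} (giving a minimum positive distance among same-level points) and that $\inf_{m\neq n}|1/n-1/m|>0$, which holds since it is attained at $m=n\pm 1$. Relatedly, the levels are not ``uniformly separated'' from one another as you say in the closing paragraph (the gaps $1/n-1/(n+1)$ shrink to zero); what is true, and sufficient, is that each fixed level is positively separated from the union of the others.
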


\begin{proof}
The first part readily follows from Theorem \ref{universal}. The second part follows from the first part using Lemma \ref{induced}.
\end{proof}

To prove Theorem \ref{universal} (and Corollary \ref{discrete}), we need some technical lemmas:

\begin{lemma}\label{dense}
Let $X$ be a locally compact separable metric space with the $C_0$ coarse structure and $A, B\subset X$ with the induced structures, where $\cl_X A=B$. Then, the inclusion $A\to B$ is a coarse equivalence.
\end{lemma}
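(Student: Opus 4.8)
The plan is to produce a coarse inverse to the inclusion $\iota\colon A\to B$ by a nearest-point type construction, and then verify the coarse equivalence using Corollary~\ref{coarse_char} in the form adapted to possibly discontinuous maps, namely Proposition~\ref{characterizing_coarse_in_C0}. First I would observe that by Lemma~\ref{induced} both $A$ and $B$, with the induced structures, carry the $C_0$ structures of the metrics they inherit from $X$; in particular $\iota$ is a continuous, proper, uniformly continuous map (it is an isometric embedding onto a dense subset of $B$), hence coarse by Corollary~\ref{coarse_char}. The content is thus to build a map $r\colon B\to A$ with $r\circ\iota$ close to $\id_A$ and $\iota\circ r$ close to $\id_B$.

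To define $r$, fix for each $b\in B$ a point $r(b)\in A$ with $d(b,r(b))<d(b,A)+1$ if $d(b,A)\geq 1$, and more carefully, use a locally finite cover argument: since $X$ is locally compact and separable, choose a locally finite open cover $(U_\lambda)$ with $D_\lambda=\cl_X U_\lambda$ compact, and on the compact sets one can make $d(b,r(b))$ as small as one pleases because $A$ is dense in $B$. The key point is that $r$ should satisfy $d(b,r(b))\to 0$ as $b$ leaves every compact set — more precisely, for every $\varepsilon>0$ there is a compact $K\subset X$ with $d(b,r(b))<\varepsilon$ for all $b\in B\setminus K$. This is arranged by choosing, inside each $D_\lambda$, the point $r(b)$ close enough to $b$; one organizes this with the same inductive exhaustion by compacta $K_1\subset K_2\subset\cdots$ as in the proof of Lemma~\ref{C0controlled_criterion}, demanding $d(b,r(b))<1/n$ whenever $b\notin K_n$. (On $B\cap A$ one may simply take $r(b)=b$.)

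Once $r$ is constructed I would check the three requirements. First, $r$ is pre-bornologous: by Lemma~\ref{proper_in_C0} it suffices that $r(K)$ has compact closure for compact $K\subset X$, which follows since $d(b,r(b))$ is bounded on $K$ (indeed $\leq 1$ there, say), so $r(K)$ lies in a bounded hence relatively compact set. Properness of $r$ is similar. Second, $r$ is bornologous: using the ``$d(b,r(b))\to 0$ at infinity'' property, condition (b) of Proposition~\ref{characterizing_coarse_in_C0} holds — given $\varepsilon>0$, pick compact $K$ with $d(b,r(b))<\varepsilon/3$ off $K$, and then $d(b,b')<\varepsilon/3$ with $(b,b')$ off $K\times K$ forces $d(r(b),r(b'))\leq d(r(b),b)+d(b,b')+d(b',r(b'))<\varepsilon$. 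Third, the compositions: the set $\{(b,r(b))\mid b\in B\}$ is continuously controlled by (the completion, equivalently the Higson compactification) $\tilde X$ since $d(b,r(b))\to0$ at infinity, hence controlled by Theorem~\ref{C0_is_topological}; equivalently one applies Lemma~\ref{C0controlled_criterion} after symmetrizing. This shows $\iota\circ r$ is close to $\id_B$, and restricting to $A$ shows $r\circ\iota$ is close to $\id_A$.

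The main obstacle is purely the bookkeeping in the construction of $r$: one must choose $r(b)$ simultaneously (i) close to $b$ for $b$ near infinity (for the closeness/bornologous estimates) and (ii) not too wildly for $b$ in a compact region (so that pre-bornologousness and properness survive), and do so without any continuity hypothesis on $r$. The locally-finite-cover plus inductive-exhaustion device from Lemma~\ref{C0controlled_criterion} handles exactly this, so I expect no genuine difficulty beyond writing it out; the rest is a routine application of Proposition~\ref{characterizing_coarse_in_C0} and Theorem~\ref{C0_is_topological}.
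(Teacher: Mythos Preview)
Your strategy is workable, but there is a real slip in the pre-bornologousness step: you assert that $r(K)$ ``lies in a bounded hence relatively compact set'' because $d(b,r(b))\le 1$ on $K$. In an arbitrary locally compact metric space, metrically bounded sets need not have compact closure (take $X=(0,1)$ with the usual metric), so this inference fails. To repair it you must arrange, in the construction of $r$, that $r(b)$ lands in a specified compact set whenever $b$ does; the cleanest way is to demand from the outset that $(b,r(b))$ lie in a fixed controlled neighborhood $E_0$ of the diagonal, whose existence is exactly what Proposition~\ref{C0_is_proper} supplies from separability. A smaller formal point: Lemma~\ref{induced} applies only to \emph{closed} subsets, and $A$ need not be closed in $X$ nor locally compact, so you cannot literally invoke Proposition~\ref{characterizing_coarse_in_C0} or Corollary~\ref{coarse_char} with $A$ as codomain. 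Your triangle-inequality estimates do establish controlledness in $X$, which is what the induced structure on $A$ demands, but you should say so directly rather than route through those results.

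The paper's proof sidesteps all of this bookkeeping. It takes the controlled neighborhood $E_0\supset\Delta_X$ from Proposition~\ref{C0_is_proper} and defines $h(b)\in A$ by the single requirement $(b,h(b))\in E_0$, which is possible because $E_0$ is a neighborhood of the diagonal and $A$ is dense in $B$. Then $\{(b,h(b))\mid b\in B\}\subset E_0$ is controlled by construction, giving both closeness relations at once, and $(h\times h)(E)\subset E_0^{-1}\circ E\circ E_0$ yields bornologousness in one line; properness is similar. Your exhaustion-and-estimate approach, once corrected, amounts to rebuilding such an $E_0$ by hand, so the paper's route is shorter and also avoids the local-compactness issue for $A$ entirely, since it never leaves the ambient coarse structure on $X$.
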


\begin{proof}
Let $i\colon A\to B$ be the inclusion, which is clearly a coarse map. By Proposition \ref{C0_is_proper}, there exists a controlled neighborhood $E_0$ of the diagonal $\Delta_X$ in $X\times X$. We define $h\colon B\to A$ by choosing a point $h(b)\in A$ with $(b,h(b))\in E_0$ for each $b\in B$. It is easy to check that $h\colon B\to A$ is also a coarse map. Then, $i\circ h$ is close to the identity $\id_B$ since the set $\{(b, h(b))\,|\,b\in B\}$ is contained in $E_0$ and hence is controlled. Similarly, the other composition $h\circ i$ is close to the identity $\id_A$. We conclude that $i\colon A\to B$ is a coarse equivalence.
\end{proof}

\begin{remark}
\normalfont
Clearly, this lemma is true for a coarse space $X$ equipped with a topology for which there is a controlled neighborhood of the diagonal $\Delta_X$ in $X\times X$, in particular for all proper coarse spaces.
Furthermore, if $X$ is such a coarse space, subsets $A$ and $B$ of $X$ are coarsely equivalent with respect to the induced structures whenever they have the same closure, $\cl_X A=\cl_X B$. 
\end{remark}

\begin{lemma}\label{embedding}
Let $X, Y$ be spaces in $\mathbf{TB}$ and $\nu X$, $\nu Y$ be their Higson coronas with respect to the $C_0$ structures. Let $j\colon \nu X\to \nu Y$ be a topological embedding. Then, there exists a coarse embedding $f\colon X\to Y$ such that $\nu f=j$.
\end{lemma}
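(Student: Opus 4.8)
The plan is to construct $f$ directly, mimicking the construction used in the proof of fullness in Theorem \ref{category_equivalence}, but taking care that the resulting corona map is the \emph{given embedding} $j$ rather than an arbitrary continuous map. First I would set up notation: write $\tilde X = X\cup\nu X$ and $\tilde Y = Y\cup\nu Y$ for the completions, which by Corollary \ref{compact_metric_smirnov} are the Higson compactifications, with metrics $d$ extending those on $X$ and $Y$. Since $j\colon\nu X\to\nu Y$ is a topological embedding between compact metrizable spaces, $j$ is a homeomorphism onto its image $j(\nu X)\subset\nu Y$, which is closed in $\tilde Y$. The key refinement over the fullness argument is this: when we pick, for $x\in X$ near $\nu X$, an approximating point $x'\in\nu X$ and then a point of $Y$ near $h(x')=j(x')$, we must choose it to lie \emph{close to} $j(x')\in j(\nu X)$, so that as $x\to\omega\in\nu X$ the images $f(x)$ converge in $\tilde Y$ to $j(\omega)$ and not merely to \emph{some} point of $\nu Y$.

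The construction: define $U_n$ to be the open $1/n$-neighborhood of $\nu X$ in $\tilde X$, with $U_0=\tilde X$. For each $n$, using compactness of $j(\nu X)$, choose finitely many points $y_{n,1},\dots,y_{n,k(n)}\in Y$ such that the balls $B(y_{n,i},1/n)$ cover $j(\nu X)$ — this is possible because $j(\nu X)\subset\tilde Y$ and $Y$ is dense in $\tilde Y$, so points of $Y$ can be found within $1/n$ of any point of $j(\nu X)$. Fix also $y_{0,0}\in Y$ arbitrarily. Now for $x\in X$, let $n\geq 0$ be largest with $x\in U_n$; if $n=0$ set $f(x)=y_{0,0}$, and if $n\geq 1$ pick $x'\in\nu X$ with $d(x,x')=d(x,\nu X)$, then pick $i$ with $j(x')\in B(y_{n,i},1/n)$, and set $f(x)=y_{n,i}$. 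Exactly as in the proof of Theorem \ref{category_equivalence}, $f$ is pre-bornologous because $f(X\setminus U_n)$ is finite for each $n$. The crucial point is that $f\cup j\colon\tilde X\to\tilde Y$ is continuous at each $\omega\in\nu X$: if $x_\lambda\to\omega$ in $\tilde X$ with $x_\lambda\in X$, then $d(x_\lambda,\nu X)\to 0$, so the index $n_\lambda$ with $x_\lambda\in U_{n_\lambda}$ tends to $\infty$, the chosen approximant $x'_\lambda$ satisfies $d(x'_\lambda,x_\lambda)\to 0$ hence $x'_\lambda\to\omega$, hence $j(x'_\lambda)\to j(\omega)$ by continuity of $j$, and finally $d(f(x_\lambda),j(x'_\lambda))<1/n_\lambda\to 0$, giving $f(x_\lambda)\to j(\omega)$. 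By Theorem \ref{C0_is_topological} the $C_0$ structure on $Y$ equals the continuously controlled structure induced by $\tilde Y$, so Proposition \ref{coarse_into_conti_controlled} applies and shows $f$ is coarse with $\nu f = j$.

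It remains to upgrade the coarse map $f$ to a coarse \emph{embedding}, i.e.\ to produce a coarse inverse for $f$ on its image. Here I would apply the fullness of $\nu$ (Theorem \ref{category_equivalence}) — or rather the same construction run in reverse — to the continuous map $j^{-1}\colon j(\nu X)\to\nu X$. More precisely: $f(X)$ is a subset of $Y$; by Lemma \ref{dense} the inclusion $f(X)\to\cl_Y f(X)$ is a coarse equivalence, so it suffices to coarsely invert $f\colon X\to f(X)$ up to this closure. One builds $g\colon f(X)\to X$ (or from a suitable subspace of $Y$ to $X$) by the analogous recipe using $j^{-1}$, arranging that $g\cup j^{-1}$ is continuous on the relevant part of the corona; then $g\circ f$ and $f\circ g$ are close to the respective identities because their corona maps are $j^{-1}\circ j=\id_{\nu X}$ and $j\circ j^{-1}=\id$, and faithfulness of $\nu$ (already established) forces closeness to the identity. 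The main obstacle I anticipate is the bookkeeping in this last step: $f(X)$ need not be all of $Y$, its closure in $\tilde Y$ meets $\nu Y$ exactly in $j(\nu X)$ (this needs a short verification from the construction), and one must check that the coarse structure $f(X)$ inherits from $Y$ is the $C_0$ structure of $\cl_Y f(X)$ restricted appropriately — Lemma \ref{induced} and Lemma \ref{dense} are the tools for this, but stitching them together with the reverse construction so that the two composites are genuinely close (not merely ``close on the corona'') is where the care is required. Once the corona identities are in hand, faithfulness delivers the closeness automatically, so the real work is confined to making the reverse construction land in the right subspace and have the right corona behavior.
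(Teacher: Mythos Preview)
Your approach is correct and follows the same logical skeleton as the paper's proof, but you are redoing work that Theorem~\ref{category_equivalence} already packages. The paper simply cites the fullness part of Theorem~\ref{category_equivalence} to obtain a coarse map $f\colon X\to Y$ with $\nu f=j$ (no need to rerun the construction, and your ``refinement'' of covering only $j(\nu X)$ rather than $\nu Y$ is harmless but unnecessary). For the embedding step, rather than building an explicit inverse $g$ and checking closeness by hand, the paper passes to the closure $\overline{f(X)}\subset Y$, which by Lemma~\ref{induced} is an object of $\mathbf{TB}$ with corona $j(\nu X)$ (via Proposition~\ref{nuf} and Corollary~\ref{compact_metric_smirnov}); the restricted map $f_0\colon X\to\overline{f(X)}$ then has $\nu f_0=j_0$ a homeomorphism, so the category equivalence immediately gives that $f_0$ is a coarse equivalence---the inverse and the closeness come for free. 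Lemma~\ref{dense} then handles the step from $\overline{f(X)}$ to $f(X)$, exactly as you anticipate. So the ``bookkeeping obstacle'' you flag dissolves once you invoke the equivalence at the level of $\overline{f(X)}$ rather than reconstructing both halves of it.
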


\begin{proof}
Let $\tilde{Y}=Y\cup \nu Y$ be the Higson compactification which coincides with the completion. As shown in the proof of Theorem \ref{category_equivalence}, there exists a coarse map $f\colon X\to Y$ such that $\nu f=j$. Let $\overline{f(X)}$ be the closure of $f(X)$ in $Y$. By Proposition \ref{nuf}, it is easy to see that $\cl_{\tilde Y}\overline{f(X)}=\cl_{\tilde{Y}}f(X)=\overline{f(X)}\cup j(\nu X)$. Hence by Corollary \ref{compact_metric_smirnov}, we have $j(\nu X)=\nu \overline{f(X)}$. Let $f_0\colon X\to \overline{f(X)}$ and $j_0\colon \nu X\to j(\nu X)$ be the maps which are equal to $f$ and $j$ respectively, with their ranges restricted. Then we have $\nu f_0=j_0$ by Proposition \ref{nuf}. Notice that $\overline{f(X)}$ is closed in $Y$, hence its $C_0$ structure coincides with the structure induced from $Y$ by Lemma \ref{induced}. Since $j_0$ is a homeomorphism, $f_0\colon X\to \overline{f(X)}$ is a coarse equivalence by Theorem \ref{category_equivalence}. The coarse equivalence $f_0$ factors as $X\to f(X)\to \overline{f(X)}$, and the second map is a coarse equivalence by Lemma \ref{dense}. Then, it easily follows that the first map $X\to f(X)$ is a coarse equivalence, as desired.
\end{proof}

\begin{proof}[Proof of Theorem \ref{universal}]Recall that every compact metrizable space can be embedded into $Q=[0,1]^\mathbb{N}$. In view of Lemma \ref{embedding}, to prove this theorem it is enough to notice that there exists a metrizable compactification $\gamma X$ of $X$ with the remainder homeomorphic to $Q$.  Then the restriction to $X$ of any compatible metric on $\gamma X$ satisfies our requirement (then, $\gamma X$ is the Higson compactification with respect to the $C_0$ structure by Corollary \ref{compact_metric_smirnov}). For completeness, we explain how to construct $\gamma X$. Since $X$ is noncompact and metrizable, there exists a sequence $(x_n)_{n\in\mathbb{N}}$ of distinct points in $X$ without convergent subsequences. Fix a countable dense subset $\{y_n\,|\,n\in\mathbb{N}\}$ in $Q$. The map $\{x_n\,|\,n\in\mathbb{N}\}\to Q=[0,1]^\mathbb{N}$ that sends each $x_n$ to $y_n$ can be extended to a continuous map $h\colon X\to Q$ by Tietze's theorem. Let $K$ be the product $(X\cup\{\infty\})\times Q$, where $X\cup\{\infty\}$ denotes the one-point compactification. The map $i\colon X\to K$ defined by $i(x)=(x,h(x))$ is a topological embedding, and the closure of its image in $K$ is $i(X)\cup(\{\infty\}\times Q)$, which is clearly a metrizable compactification of $X$ with the remainder homeomorphic to $Q$.
\end{proof}

\section*{acknowledgements}
The second author deeply acknowledges the hospitality of
Oberwolfach mathematical research institute in Germany,
where he has undertaken a part of this work.
The authors thank Takamitsu Yamauchi for a useful comment, which
substantially improved the proof of Lemma \ref{r/3}.

\end{document}